\newtheorem{theorem}{Theorem}
\theoremstyle{definition}
\theoremstyle{plain}
\newtheorem{thm}{Theorem}[section]
\newtheorem{lem}[thm]{Lemma}
\newtheorem{prop}[thm]{Proposition}
\newtheorem{corollary}[thm]{Corollary}
\title[Skein and asymptotic faithfulness]{Comparing skein and quantum group representations and their application to asymptotic faithfulness}
\author{Wade Bloomquist}
\email{bloomquist@math.ucsb.edu}
\address{Dept. of Mathematics\\
    University of California\\
    Santa Barbara, CA 93106-6105\\
    U.S.A.}
\author{Zhenghan Wang}
\email{zhenghwa@microsoft.com}
\address{Microsoft Station Q and Dept. of Mathematics\\
    University of California\\
    Santa Barbara, CA 93106-6105\\
    U.S.A.}
\thanks{The second author is partially supported by NSF grants DMS-1410144 and DMS-1411212.}
\begin{document}

\maketitle

\begin{abstract}

We make two related observations in this paper.  First the representations of mapping class groups from the Ising TQFT and its quantum group counterpart $SU(2)_2$ are neither equivalent as representations nor Galois conjugate to each other.  Hence mapping class
group representations obtained from quantum skein theory are fundamentally distinct
from those obtained from quantum group Reshetikhin-Turaev or geometric quantization
constructions.  Then we generalize the asymptotic faithfulness of the skein quantum $SU(2)$ representations of mapping class groups of orientable closed surfaces to skein quantum $SU(3)$.  We conjecture asymptotic faithfulness holds for skein quantum $G$ representations when $G$ is a simply-connected simple Lie group.  The difficulty for such a generalization lies in the lack of an explicit description of the fusion spaces with multiplicities to define an appropriate complexity of state vectors.

\end{abstract}

\section{Introduction}

Both classical and quantum topology in 3-dimensions are mature subjects, but deep connections between the two worlds are still rare.  Since the classical world is a limit of the quantum world as the Planck constant goes to zero, we expect classical topological information to emerge from quantum topology when the level $k$ of the physical quantum Witten-Chern-Simons theories (WCS) $(G,k)$ \cite{Witt} for a Lie group $G$ goes to infinity.  The asymptotic faithfulness of mapping class group representations is such a phenomenon, and so would be the volume conjecture.  Representations of mapping class groups can be constructed either using the skein method or the representation categories of quantum groups at roots of unity (see Section 6 of \cite{RW} and the references therein).  In this paper, we make two related observations.  First the representations of mapping class groups from the Ising TQFT and its quantum group counterpart $SU(2)_2$ are neither equivalent as representations nor Galois conjugate to each other.  Hence skein quantum representations of mapping class groups are different from the Reshetikhin-Turaev (RT) ones from quantum groups or geometric quantization.

The asymptotic faithfulness of quantum skein  $SU(2)$ is proved in \cite{FWW},  which is independent of, and different from, the  parallel approach using quantum $SU(n)$ RT- TQFTs\footnote{The account of the proof of the asymptotic faithfulness in Section 5 of \cite{MBomb} does not match the recollections of the second author.} \cite{And}.  The mathematical realizations of quantum WCS theories $(G,k)$ are commonly considered to be the RT-TQFTs from the modular tensor categories (MTCs) $G_k$, which are constructed using quantum groups at roots of unity \cite{RT1,RT2,Tur,RSW}.  A closely related construction of TQFTs is through skein theory related to $SU(2)$, called the Jones-Kauffman (JK) TQFTs \cite{Tur, BHMV, Wang}, where the Temperley-Lieb algebras as rediscovered by Jones \cite{Jon} and reformulated by Kauffman \cite{KL} using diagrams are exemplars of skein theories.
Contrary to some claims in the literature, these two versions of TQFTs, though closely related, are different as their underlying MTCs are different.  For example, in the $SU(2)$ level=2 case, the MTC $SU(2)_2$ for RT-TQFT has central charge $c=3/2$, while the skein JK-MTC is the Ising category with  $c=1/2$ (see the explicit data 5.3.4 and 5.3.5 on page 376 of \cite{RSW}), though both theories share the same modular $S$ matrix.  More concretely, in the mapping class group representation of the torus the image of the Dehn twist along the meridional curve is the $T$-matrix of the MTCs, so we have
\[T_{Ising}=\left(\begin{array}{ccc}
1 & 0 & 0\\
0 & e^{\frac{2\pi i}{16}} & 0 \\
0 & 0 & -1\\
\end{array}\right),\]
 and
\[T_{SU(2)_2}=\left(\begin{array}{ccc}
1 & 0 & 0\\
0 & e^{\frac{6\pi i}{16}} & 0 \\
0 & 0 & -1\\
\end{array}\right),\] respectively.   In general, there are non-trivial Frobenius-Schur indicators in the RT-TQFTs, which make them hard to work with using graphical calculus.  We choose to work with the JK-TQFTs and their skein generalizations due to their elementary combinatorial nature, and study the asymptotic faithfulness of their representations of the mapping class groups of orientable closed surfaces.

A salient feature of $SU(2)$-TQFTs is the multiplicity-freeness of all fusion spaces: the dimension of the space for a labeled pair of pants is either $0$ or $1$.  A convenience for skein quantum $SU(2)$  is that all simple objects are self-dual with trivial Frobenius-Schur indicators.  It follows that the skein theory requires only unoriented simple closed curves and unoriented trivalent graphs with uncolored trivalent vertices.   Then the complexity of a state vector for an induction has obvious topological meaning as the measure of the number of parallel simple curves.  To generalize to $SU(3)$, we have to use oriented curves and colored trivalent vertices.  Once we identify an appropriate complexity for state vectors in skein quantum $SU(3)$, our proof is completely analogous to the $SU(2)$ case provided we have the explicit knowledge of the fusion spaces given by the generalizations of Jones-Wenzl projectors.

The difficulty in extending this result to the other rank $2$ spiders of Kuperberg \cite{Kup} lies in obtaining an explicit description of the fusion spaces.  The construction of a basis, a key result obtained in \cite{Suc}, allows our argument to be carried through.  In particular, for $SU(3)$ the parameter $\ell$ described below is a crucial simplification that is not easily extended to the other rank $2$ spiders.

In section 2, we show that the two representations of the mapping class groups of oriented closed surfaces from RT-TQFT \cite{RT1,RT2} and JK-TQFT \cite{KL, Tur, BHMV} at level=$2$ are neither Galois conjugates of each other nor equivalent to each other as representations.    Then we generalize the faithfulness from skein quantum $SU(2)$ to skein quantum $SU(3)$ following the same strategy of \cite{FWW} in Section 4.

We recall a question posed by Turaev \cite{Tur}:  \lq\lq Is the (projective) action of the mapping class group of a closed oriented surface $\Sigma$ in the module of states of $\Sigma$ irreducible?  Consider the kernels of these actions corresponding to all modular categories.  Is the intersection of these kernels non-trivial? (This would be hard to believe.)"  In the specific case of skein $G_k$ TQFTs for simply-connected Lie group $G$, we conjecture that asymptotic faithfulness holds, meaning that the intersection of the kernels are the central elements of the mapping class group.

\section{Skein vs Quantum Group}

Two important methods to construct MTCs are the quantum groups at roots of unity and skein theoretical method (see Section 6 of \cite{RW} and the references therein).  Associated to $SU(2)$, they are the MTCs corresponding to the RT- and JK- TQFTs, respectively.  A common misconception is that they are Galois conjugates of each other.  These will not be the case as in general Galois conjugation may send a unitary theory to a non-unitary one, but both the RT- and JK- MTCs are unitary ones.  In this section, we will explicitly demonstrate this using the Ising and $SU(2)_2$.  Then we will show that the representations of mapping class groups are also not equivalent as representations.  Therefore, the Ising and $SU(2)_2$ representations are neither equivalent as representations nor Galois conjugate to each other.

\subsection{Galois twist}

As shown in \cite{DHW}, every MTC $\mathcal{B}$ has a defining number field $K_{\mathcal{B}}$, though not necessarily canonical.  Then every Galois automorphism $s$ of $K_{\mathcal{B}}$ leads to another MTC $\mathcal{B}_s$ by applying $s$ to all the defining data of $\mathcal{B}$.

Using the data from \cite{RSW} (see the explicit data 5.3.4 and 5.3.5 on page 376 of \cite{RSW}), we can see that both Ising and $SU(2)_2$ can be defined using the number field $\mathbb{Q}(q),q=e^{\frac{2\pi i}{16}}$.  To change Ising to $SU(2)_2$, we have to send $q$ to $s(q)=q^3$ in order to match the $T$-matrix.  But then $s(\sqrt{2})=-\sqrt{2}$, hence $s$ sends the $s$-matrix of Ising to a theory with a negative quantum dimension, not $SU(2)_2$.  This is a general phenomenon for Galois conjugate.  Therefore, Ising and $SU(2)_2$ are not Galois conjugates of each other\footnote{As a referee pointed out that another argument would be to compute the second FS-indicators
for the $\sigma$ particle in $SU(2)_2$ and Ising because no Galois automorphism can take 1 to -�1.}.

\subsection{Ising vs $SU(2)_2$ representations}

In this subsection, more explicit data that distinguish the skein quantum $SU(2)$  at level $k$ from RT-$SU(2)_k$ representations are provided.  We focus on the Ising and $SU(2)_2$ modular tensor categories, which give rise to inequivalent representations of the mapping class group representations, and consequently different $3$-manifold invariants of mapping tori.   Ising and $SU(2)_2$ have the same fusion rules with the same label set $L=\{0,1,2\}$ and quantum dimensions $d_0=d_2=1$ and $d_1=\sqrt{2}$.  The nontrivial fusion rules are
\[1\otimes 1=0+2, 1\otimes 2=2\otimes 1=1, 2\otimes 2=0.\]   They are different modular tensor categories with the same $S$ matrix
\[S=\left(\begin{array}{ccc}
1 & \sqrt{2} & 1\\
\sqrt{2}  & 0 & -\sqrt{2}\\
1 & -\sqrt{2} & 1\\
\end{array}\right),\] but  different topological twists.  In the Ising category $\theta^{Ising}_0=1, \theta^{Ising}_1=e^{\pi i/8}, \theta^{Ising}_2=-1$, but in $SU(2)_2$, $\theta^{SU(2)_2}_0=1, \theta^{SU(2)_2}_1=e^{3\pi i/8}, \theta^{SU(2)_2}_2=-1$.

First we consider the genus=$1$ torus case.  The mapping class group of the torus is $SL(2,\mathbb{Z})$ with standard generators $s,t$.  The assignment of $\frac{S}{2}$ to $s$ and $T$ to $t$ leads to a projective representation of $SL(2,\mathbb{Z})$ whose projective phases are powers of  $e^{\frac{\pi i c}{4}}$, where $c$ is $\frac{1}{2}$ for Ising and $\frac{3}{2}$ for $SU(2)_2$ \cite{Tur, RSW}.

\begin{theorem}
Suppose $x \in SL(2,\mathbb{Z})$ is sent to a matrix $M_{\alpha}(x)$ of the form $M_{\alpha}(x)=S^{a_1}T_\alpha^{b_1}\cdots S^{a_n}T_\alpha^{b_n}$ such that $\sum_{j=1}^n a_j$ is even, where $\alpha$ is either Ising or $SU(2)_2$.  If $\sum_{j=1}^n b_j=2$ mod $4$, then
\[|Tr(M_{Ising})|\neq |Tr(M_{SU(2)_2})|. \]
\end{theorem}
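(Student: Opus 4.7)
The plan is Galois-theoretic. The two $T$-matrices are cyclotomic Galois conjugates: the automorphism $\sigma_{3}\in\mathrm{Gal}(\mathbb{Q}(\zeta_{16})/\mathbb{Q})$ sending $\zeta_{16}\mapsto\zeta_{16}^{3}$ maps $T_{Ising}$ to $T_{SU(2)_{2}}$. A direct computation gives $\sigma_{3}(\sqrt{2})=\zeta_{16}^{6}+\zeta_{16}^{-6}=-\sqrt{2}$, so applying $\sigma_{3}$ entrywise to $S$ is the same as conjugation by $P=\mathrm{diag}(1,-1,1)$, i.e.\ $\sigma_{3}(S)=PSP$. Since $P$ commutes with every $T_{\alpha}$ (both diagonal), a trivial induction on word length yields
\[\sigma_{3}(M_{Ising}) \;=\; P\,M_{SU(2)_{2}}\,P,\]
so $\sigma_{3}\bigl(\mathrm{Tr}(M_{Ising})\bigr)=\mathrm{Tr}(M_{SU(2)_{2}})$. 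The cyclotomic Galois group is abelian, hence $\sigma_{3}$ commutes with complex conjugation $\sigma_{15}$, and therefore
\[\sigma_{3}\bigl(|\mathrm{Tr}(M_{Ising})|^{2}\bigr)=|\mathrm{Tr}(M_{SU(2)_{2}})|^{2}.\]

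Equality of the two absolute values would force $|\mathrm{Tr}(M_{Ising})|^{2}$ to be a real element fixed by $\sigma_{3}$. The subgroup $\langle\sigma_{3}\rangle\subset(\mathbb{Z}/16)^{\times}$ has order $4$ (since $3^{4}\equiv1\pmod{16}$), so its fixed field has degree $2$ over $\mathbb{Q}$; one checks that $\sqrt{-2}=i\sqrt{2}$ is fixed by $\sigma_{3}$, so the fixed field is $\mathbb{Q}(\sqrt{-2})$, whose intersection with $\mathbb{R}$ is $\mathbb{Q}$. Thus the theorem reduces to showing that, under the hypotheses $\sum a_{j}$ even and $\sum b_{j}\equiv 2\pmod 4$, one has $|\mathrm{Tr}(M_{Ising})|^{2}\notin\mathbb{Q}$.

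For this final step I would expand the trace as a sum over index sequences $(i_{0},\ldots,i_{n})\in\{0,1,2\}^{n}$ with $i_{0}=i_{n}$: the contribution of such a sequence is $\bigl(\prod_{j=1}^{n}(S^{a_{j}})_{i_{j-1}i_{j}}\bigr)\cdot\omega^{B_{J}}(-1)^{B_{K}}$, where $\omega=\zeta_{16}$, $J=\{j:i_{j}=1\}$, $K=\{j:i_{j}=2\}$, and $B_{X}=\sum_{j\in X}b_{j}$. Grouping by the pair $(J,K)$ writes the trace as $\sum_{J,K}C(J,K)\,\omega^{B_{J}}(-1)^{B_{K}}$ with $C(J,K)\in\mathbb{Q}(\sqrt{2})$, and squaring the absolute value produces an explicit expansion of $|\mathrm{Tr}(M_{Ising})|^{2}$ in the $\mathbb{Q}$-basis $\{1,\sqrt{2},\sqrt{2+\sqrt{2}},\sqrt{2}\sqrt{2+\sqrt{2}}\}$ of $\mathbb{Q}(\zeta_{16})\cap\mathbb{R}$. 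The congruence $\sum b_{j}\equiv 2\pmod 4$ forces the ``all $i_{j}=1$'' term to carry an irrational factor $\zeta_{8}^{2k+1}$, and the parity condition on $\sum a_{j}$ pins down the $S$-matrix parity of the competing terms so that the resulting $\sqrt{2}$- or $\sqrt{2+\sqrt{2}}$-component cannot be cancelled by the other paths (whose $T$-contributions are only $\pm 1$). Turning this heuristic into a clean bookkeeping argument that at least one of the three non-rational basis coefficients is nonzero is, I expect, the main technical obstacle.
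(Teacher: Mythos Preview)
Your Galois-theoretic reduction is correct and attractive: $\sigma_{3}$ sends $T_{Ising}\mapsto T_{SU(2)_{2}}$ and $S\mapsto PSP$, so $\sigma_{3}(\mathrm{Tr}\,M_{Ising})=\mathrm{Tr}\,M_{SU(2)_{2}}$, and equality of the absolute values would force $|\mathrm{Tr}\,M_{Ising}|^{2}\in\mathbb{Q}(\sqrt{-2})\cap\mathbb{R}=\mathbb{Q}$. But the proof stops exactly where you say it does: you have not shown $|\mathrm{Tr}\,M_{Ising}|^{2}\notin\mathbb{Q}$, and the path-expansion heuristic does not close as written. The ``all $i_{j}=1$'' path carries $T$-factor $\zeta_{16}^{\sum b_{j}}=\zeta_{8}^{\,\mathrm{odd}}$, which is indeed irrational, but its $S$-coefficient is $\prod_{j}(S^{a_{j}})_{11}$, and the middle diagonal entry of $S$ is $0$; hence this path vanishes whenever any $a_{j}$ is odd, so it cannot be the source of irrationality in general. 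Once that term is gone, the remaining paths mix labels $\{0,1,2\}$ and their $T$-contributions are various powers of $\zeta_{16}$ and $\pm 1$ with $\sqrt{2}$-weighted $S$-coefficients; nothing in your sketch prevents the non-rational basis components of $|\mathrm{Tr}|^{2}$ from cancelling, and the role you assign to the parity of $\sum a_{j}$ (``pins down the $S$-matrix parity'') is not made precise enough to rule this out.

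The paper takes a completely different and much more elementary route. It first computes $|\mathrm{Tr}(T_{\alpha}^{m})|$ directly for $m\equiv 2\pmod 4$, obtaining $\sqrt{5+2\sqrt{2}}$ for one theory and $\sqrt{5-2\sqrt{2}}$ for the other (the roles swap depending on $m\bmod 16$). For the general word it invokes trace cyclicity together with the fact that $S^{2}$ acts as a scalar of norm $1$: since $\sum a_{j}$ is even, the $S$-factors are absorbed and one is reduced to $|\mathrm{Tr}(T_{\alpha}^{\sum b_{j}})|$, i.e.\ back to the pure-$T$ case already handled. So the paper never confronts the irrationality question you isolate; it sidesteps it by collapsing the word to a single diagonal power. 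Your Galois framework is more conceptual and would transport to other pairs of Galois-conjugate modular data, but to finish along your lines you would need an argument for $|\mathrm{Tr}\,M_{Ising}|^{2}\notin\mathbb{Q}$ that does not rely on a single distinguished path.
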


\begin{proof}
First we prove that for any $m=2$ mod $4$,
\[|Tr(T^m_{Ising})|\neq |Tr(T^m_{SU(2)_2})|. \]
For the Ising theory, setting $m=4a+2$,
we have that
\[T_{Ising}^m=\left(\begin{array}{ccc}
1 & 0 & 0\\
0 & e^{\pi i(2a+1)/4} & 0\\
0 & 0 & 1\\
\end{array}\right)\]
so
\[Tr(T_{Ising}^m)=2+e^{\pi i(2a+1)/4}.\]
It follows that
\[|Tr(T_{Ising}^m)|=\sqrt{(2+\cos(\frac{\pi(2a+1)}{4}))^2+\frac{1}{2}}.\]
When $2a+1= 1 \mod 8$ or $2a+1= 7 \mod 8$,
\[|Tr(T_{Ising}^m)|=\sqrt{5+2\sqrt{2}}.\]
When $2a+1= 3\mod 8$ or $2a+1= 5\mod 8$,
\[|Tr(T_{Ising}^m)|=\sqrt{5-2\sqrt{2}}.\]

For the $SU(2)_2$ theory,
\[T_{SU(2)_2}^m=\left(\begin{array}{ccc}
1 & 0 & 0\\
0 & e^{3 \pi i(2a+1)/4} & 0\\
0 & 0 & 1\\
\end{array}\right)\]
so
\[Tr(T_{SU(2)_2}^m)=2+e^{3 \pi i(2a+1)/4},\]
so we have
\[|Tr(T_{SU(2)_2}^m)|=\sqrt{(2+\cos(\frac{3\pi(2a+1)}{4})^2+\frac{1}{2}}.\]
When $2a+1= 1 \mod 8$ or $2a+1= 7 \mod 8$,
\[|Tr(T^m)|=\sqrt{5-2\sqrt{2}}.\]
When $2a+1= 3\mod 8$ or $2a+1= 5\mod 8$,
\[|Tr(T^m)|=\sqrt{5+2\sqrt{2}}.\]

For the general case $M$, by
using $Tr(AB)=Tr(BA)$ and  $S^2$ being sent to a phase factor of norm=$1$, we see that all $S$ factors can be dropped as $\sum_{j=1}^n a_j$ is even.  Therefore, the general case reduces to the case that $T$ appears $2 \mod 4$ times.

\end{proof}

Note that the theorem implies that the three manifold invariants from $SU(2)_2$ and Ising are different for certain lens spaces.

\par
The genus $1$ case implies similar results for higher genera.  For example, the Reshetikhin-Turaev vector space associated to a genus two surface is the space of admissible labellings of the spine of the handlebody bounded by the genus $2$ surface.  We will denote a basis element by $(a,b,c)$ as seen in Figure \ref{g2basis}.

\begin{figure}
\includegraphics[scale=.4]{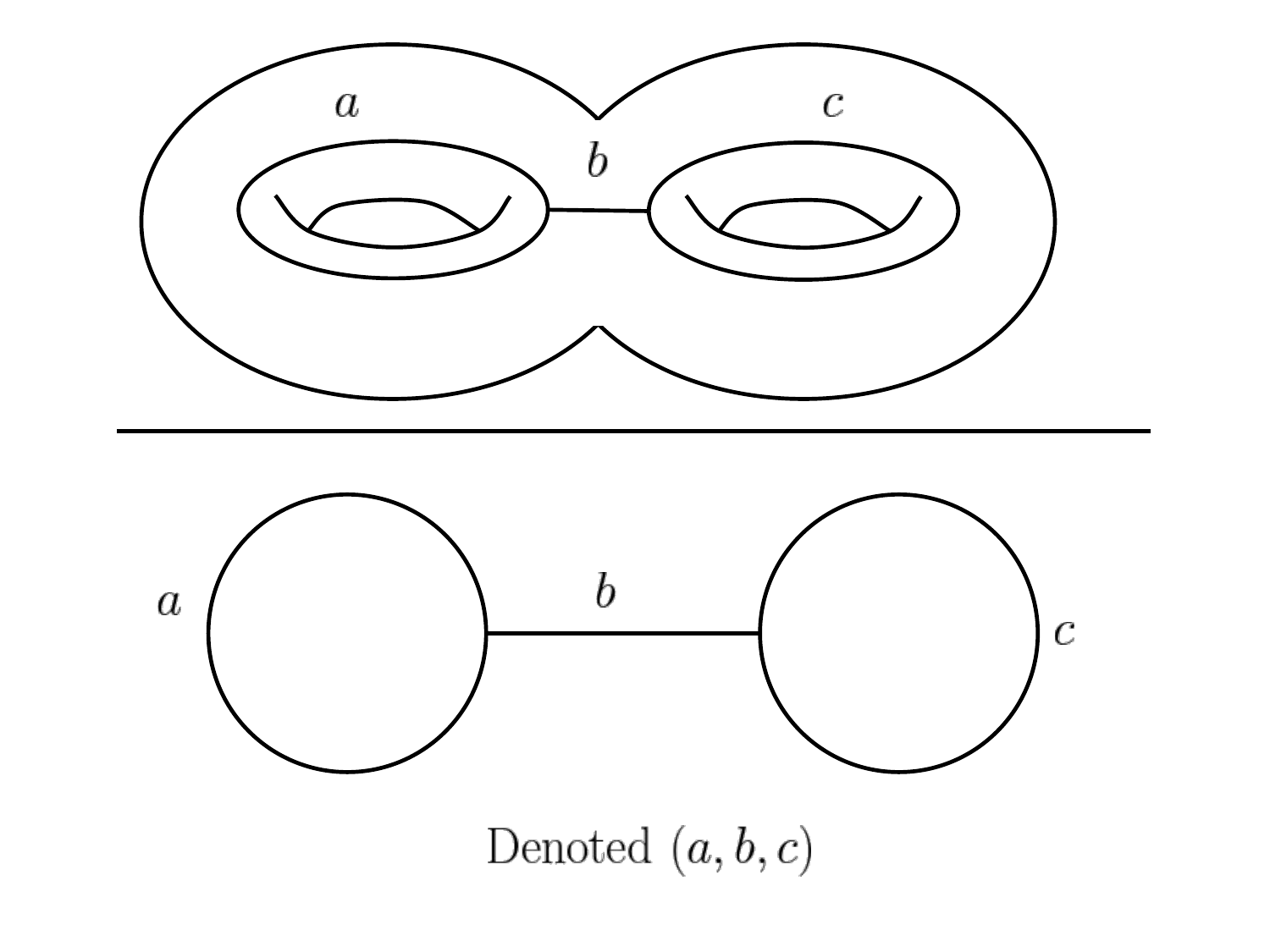}
\caption{•}\label{g2basis}
\end{figure}

To work explicitly with matrices, we write down an ordered basis using the fusion rules shared by Ising and $SU(2)_2$:
\[v_1=(0,0,0), v_2=(1,0,0), v_3=(2,0,0)\]
\[v_4=(0,0,1), v_5=(1,0,1), v_6=(2,0,1)\]
\[v_7=(0,0,2), v_8=(1,0,2), v_9=(2,0,2)\]
\[v_{10}=(1,2,1).\]
Our mapping classes of interest will be analogues of the $T$ matrix in this higher genus setting.  Take $M$ to be the Ising representation of the positive Dehn twist along the meridional curve, $\gamma_1$, that bounds the disk which is dual to the edge of the spine labeled $a$, as seen in Figure \ref{meridian}.  Similarly define $M^\prime$ to be the $SU(2)_2$ analogue.

\begin{figure}
\includegraphics[scale=.4]{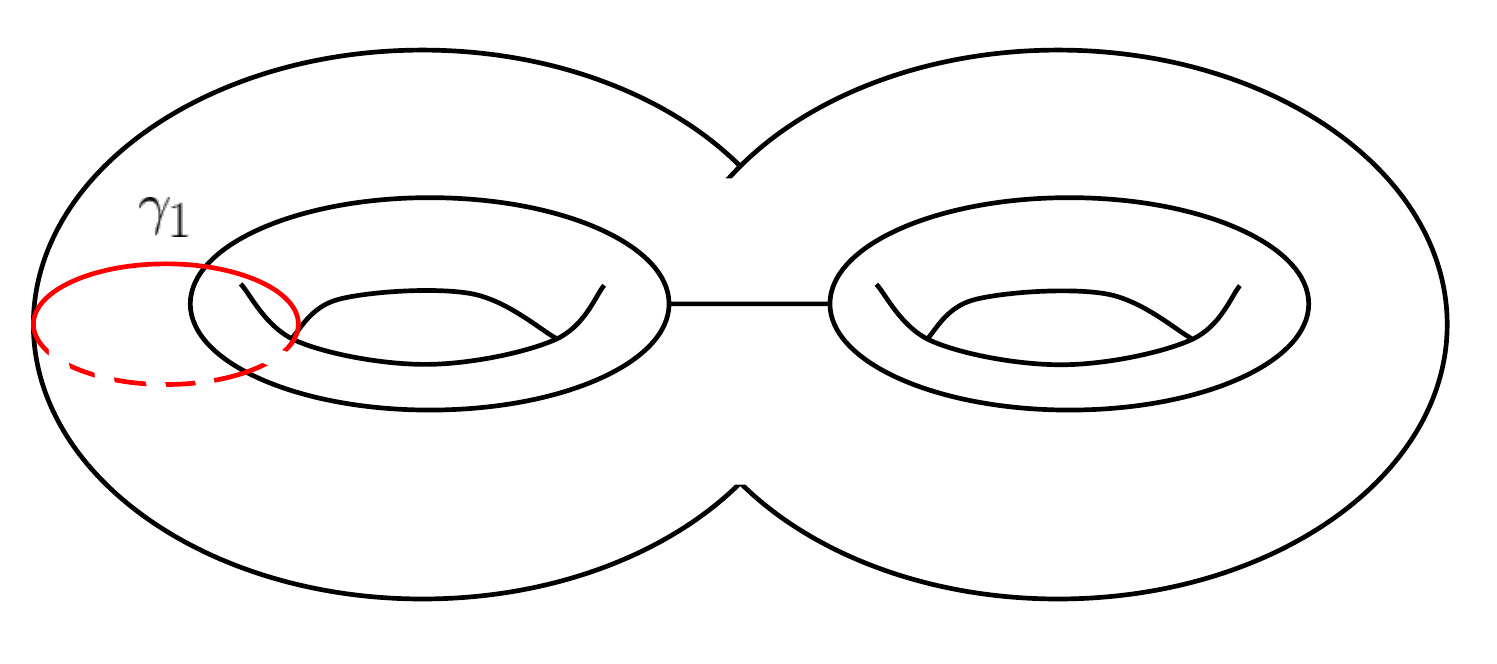}
\caption{•}\label{meridian}
\end{figure}

Explicitly, the action on this basis is given by $M(a,b,c)=\theta_a (a,b,c)$.  Thus,
\[M=diag[1,e^{\pi i/8},-1,1,e^{\pi i/8}, -1, 1, e^{\pi i/8}, -1, e^{\pi i/8}]\]
and
\[M^\prime=diag[1,e^{3\pi i/8},-1,1,e^{3\pi i/8}, -1, 1, e^{3\pi i/8}, -1, e^{3\pi i/8}].\]
\begin{corollary}
With $M$ and $M^\prime$ as defined above,
\[|Tr(M^m)|\neq |Tr((M^\prime)^m)|\]
for $m= 2\mod 4$.
\end{corollary}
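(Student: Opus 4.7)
The plan is to mimic the genus~$1$ argument in Theorem~1 directly, since both $M$ and $M'$ are already explicitly diagonal. The $m$-th power is therefore obtained by raising each diagonal entry to the $m$-th power, and the trace is a weighted sum of $m$-th powers of the three twist eigenvalues $\theta_0=1$, $\theta_1$, $\theta_2=-1$ appearing on the diagonal, with the multiplicities dictated by the basis $\{v_1,\dots,v_{10}\}$.

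First I would tabulate these multiplicities from the list of basis vectors: $\theta_0$ appears at $v_1,v_4,v_7$, $\theta_2$ appears at $v_3,v_6,v_9$, and $\theta_1$ appears at $v_2,v_5,v_8,v_{10}$. So both matrices have three $1$'s, three $-1$'s, and four copies of $\theta_1$ (where $\theta_1=e^{\pi i/8}$ for $M$ and $\theta_1=e^{3\pi i/8}$ for $M'$). Then
\[
Tr(M^m) = 3 + 4e^{m\pi i/8} + 3(-1)^m, \qquad Tr((M')^m) = 3 + 4e^{3m\pi i/8} + 3(-1)^m.
\]
Specializing to $m\equiv 2\pmod 4$ kills the sign ambiguity in the last term, giving $Tr(M^m) = 6 + 4e^{m\pi i/8}$ and $Tr((M')^m) = 6 + 4e^{3m\pi i/8}$.

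Next I would compute the squared moduli,
\[
|Tr(M^m)|^2 = 52 + 48\cos(m\pi/8), \qquad |Tr((M')^m)|^2 = 52 + 48\cos(3m\pi/8),
\]
and reduce the claim to showing $\cos(m\pi/8) \ne \cos(3m\pi/8)$ whenever $m=4a+2$. Writing $m\pi/8 = (2a+1)\pi/4$ and applying the sum-to-product identity gives
\[
\cos\!\tfrac{(2a+1)\pi}{4} - \cos\!\tfrac{3(2a+1)\pi}{4} = 2\sin\!\tfrac{(2a+1)\pi}{2}\,\sin\!\tfrac{(2a+1)\pi}{4},
\]
and both factors are nonzero for every integer $a$ (the first is $\pm 1$, the second is $\pm\sqrt 2/2$). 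This forces the two squared moduli to differ by $\pm 24\sqrt 2$, proving the corollary.

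There is really no serious obstacle here: once the eigenvalue multiplicities on the genus~$2$ basis are read off correctly, the argument is structurally identical to the $m\equiv 2\pmod 4$ portion of the proof of Theorem~1, only with the $2+e^{m\pi i/8}$ of the torus replaced by the uniformly shifted $6+4e^{m\pi i/8}$ of the genus~$2$ computation. The only small bookkeeping point worth care is making sure $v_{10}=(1,2,1)$ contributes one extra $\theta_1$, so that the multiplicity of $\theta_1$ is $4$ rather than $3$; this is what breaks the symmetry between the three eigenvalue classes and guarantees that the trace is sensitive to the difference between $\theta_1^{Ising}$ and $\theta_1^{SU(2)_2}$.
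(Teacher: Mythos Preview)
Your argument is correct and follows the approach the paper implicitly intends: the corollary is stated without proof, as an immediate higher-genus analogue of Theorem~1, and you have simply carried out that analogue explicitly by reading off the eigenvalue multiplicities $(3,4,3)$ from the diagonal form of $M$ and $M'$ and repeating the trace computation. One trivial arithmetic slip: the difference of the squared moduli is $48\bigl(\cos\tfrac{(2a+1)\pi}{4}-\cos\tfrac{3(2a+1)\pi}{4}\bigr)=\pm 48\sqrt{2}$, not $\pm 24\sqrt{2}$; this does not affect the conclusion.
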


\section{Skein $SU(3)$ TQFTs}

\subsection{$SU(3)$ Skein Theory}

\subsubsection{Kuperberg's spiders}
The spiders are skein constructions of the representation theory of a simple Lie algebra (and their associated quantum groups).  These have explicit combinatorial descriptions when the Lie algebra is of rank $2$, which provide a generalization of the Temperley-Lieb algebra for $A_1$ to $A_2, B_2,$ and $G_2$.  We will only focus on the case of $A_2$, as our interest will be in $U_q(\mathfrak{sl}_3)$.  The $A_2$ spider is generated by the following two webs seen in Figure \ref{webg}

\begin{figure}[ht!b]
\centering
\includegraphics[scale=.2]{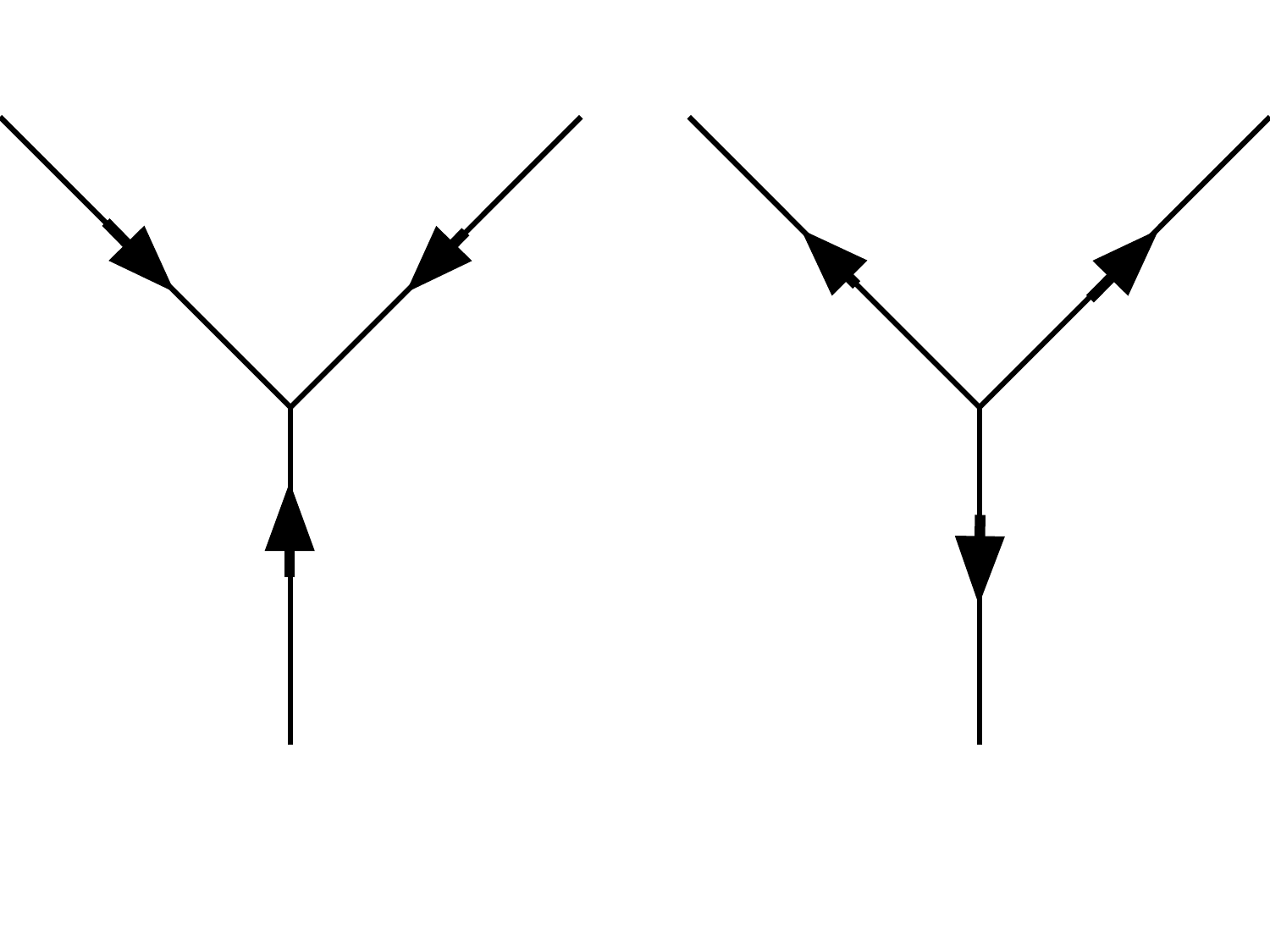}
\caption{Sinks and Sources}\label{webg}
\end{figure}

subject to the following relations in Figure \ref{webr}.

\begin{figure}[ht!b]
\centering
\includegraphics[scale=.3]{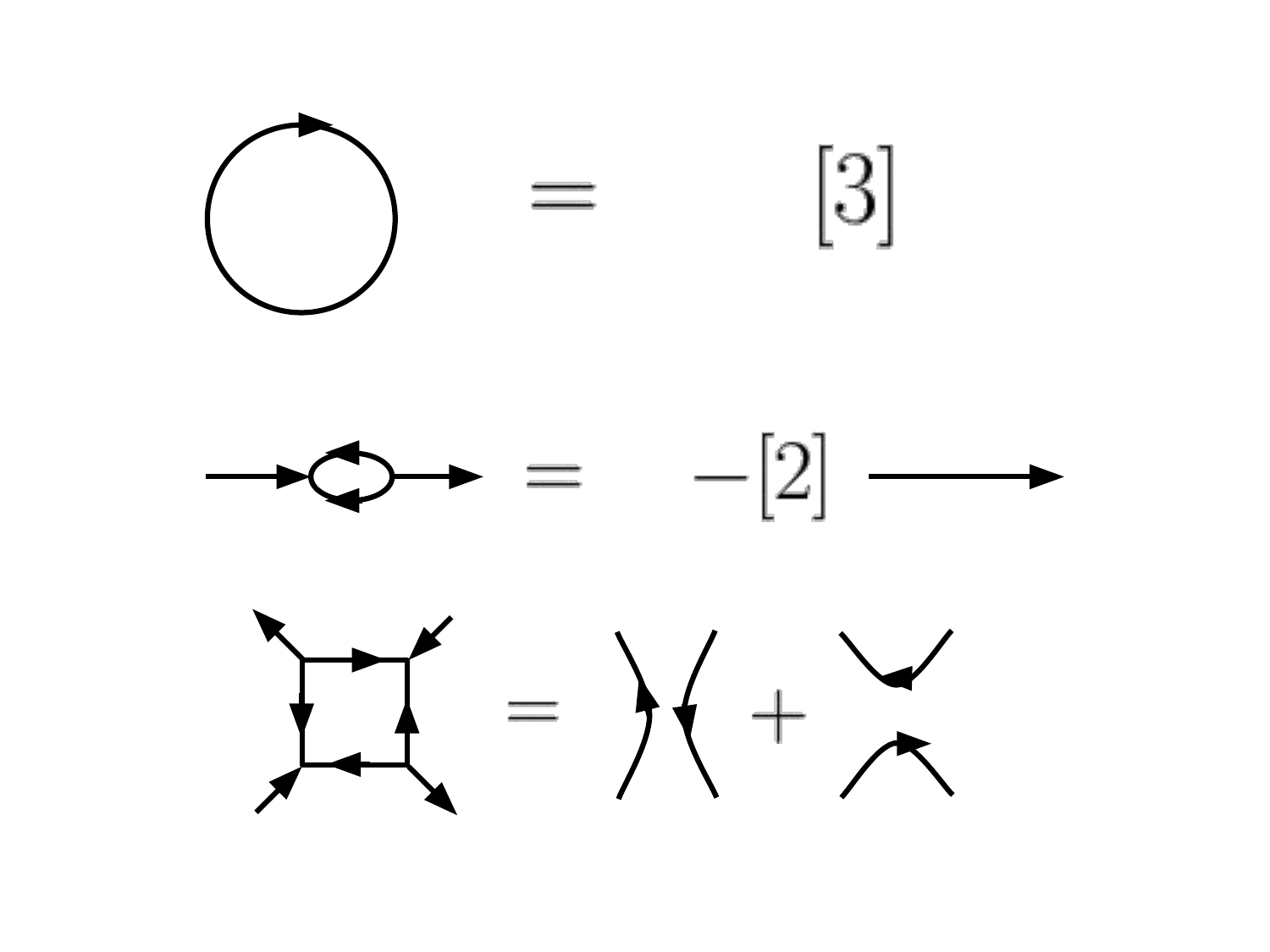}
\caption{$SU(3)$ Relations}\label{webr}
\end{figure}

Kuperberg abstractly proved the existence of generalized \lq\lq Jones-Wenzl idempotents" for the simple Lie algebras of rank=$2$, which are called clasps \cite{Kup}.

\subsubsection{Generalization of Jones-Wenzl Idempotents}
The work of  Kim developed the theory of Jones-Wenzl idempotents for rank $2$ Lie algebras, and in particular for $A_2$.  A graphical recursion was found for these projectors \cite{Kim}.
These are the minimal projectors in the skein algebra of the disk.  They satisfy the annihilation axiom as well, under both $Y's$ and caps.  We introduce the notation $(m,n)$ for the fundamental projector indexed by $m$ and $n$.
\begin{theorem}
For $a,b\geq 1$, the fundamental projectors satisfy the recursion given in Figure $5$.
\end{theorem}

\begin{figure}[ht]
\centering
\includegraphics[scale=.45]{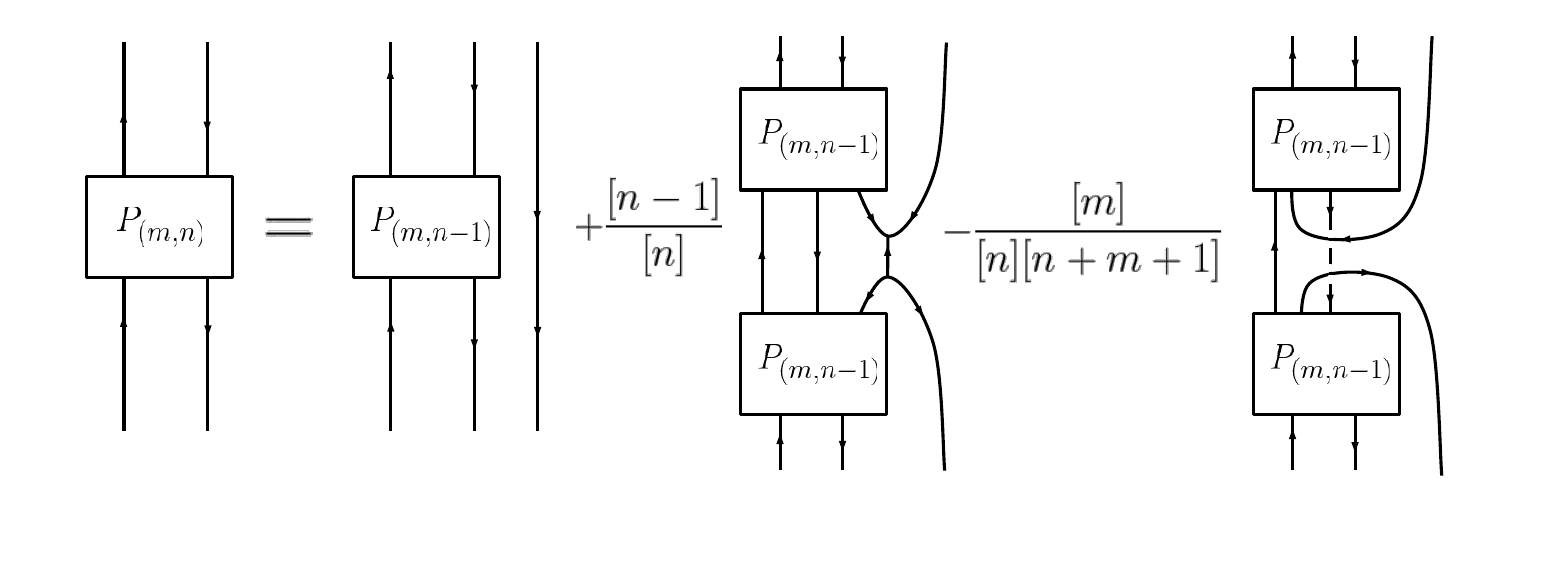}
\caption{The fundamental projectors for $A_2$}
\end{figure}
Where we note that the crossing is used for convenience and expanding into the standard basis would and using the annihilation property of the projectors would yield the unique maximal cut out from the hexagonal tiling with the appropriate boundary.
\subsubsection{Skein $SU(3)_k$ Intertwiner or Fusion or Triangle Spaces}
To build up the theory of the intertwiner spaces in the $A_2$ web in the classic case, we turn our attention to
\[I((m_1,n_1),(m_2,n_2),(m_3,n_3))\]
\[\cong Hom((m_1,n_1)\otimes(m_2,n_2)\otimes (m_3,n_3),\mathbb{C})\]
\[\cong Hom((m_1,n_1)\otimes (m_2,n_2),(n_3,m_3)).\]

Note that for $SU(3)$,  the dual of $(m,n)$ is $(n,m)$ and the above equation defines $I((m_1,n_1),(m_2,n_2),(m_3,n_3))$.  Corresponding to the disc with $m_1+m_2+m_3$ entries and $n_1+n_2+n_3$ exits, where these boundary points are organized into three groups, the projector $(m_i,n_i)$ is placed on the corresponding grouping.  Up to an invertible scalar from above, this construction gives us an isomorphism class of vector spaces, called the intertwiner space of type $((m_1,n_1),(m_2,n_2),(m_3,n_3))$, denoted $I((m_1,n_1),(m_2,n_2),(m_3,n_3))$, which will be referred to as the fusion or triangle space $((m_i,n_i))$.
\par

A necessary condition for the intertwiner spaces to be nontrivial is
\[|(m_1+m_2+m_3)-(n_1+n_2+n_3)|=3\ell\]
where $\ell \geq 0$ is a parameter of the space.  Suppose that $m_1+m_2+n_2=n_1+n_2+m_3=s$, and let $\ell\geq 0$ be as above, then denote by $I_\ell(m_i,n_i)$ the triangle space $I((m_i+\ell,n_i))$, and by $I^\ell(m_i,n_i)$ the triangle space $I((m_i,n_i+\ell))$.  Define $p_i=s-m_i-n_i$.

\begin{thm}
\begin{enumerate}
\item $I_\ell((m_i,n_i))$ is nontrivial if and only if $p_i\geq 0$
\item $dim(I_\ell((m_i,n_i)))=\min(m_i,n_i,p_i)+1.$
\end{enumerate}
\end{thm}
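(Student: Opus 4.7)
The plan is to construct an explicit basis of $I_\ell((m_i,n_i))$ using non-elliptic $A_2$ webs in a disk, following the scheme of Suciu \cite{Suc}. Unwinding the definition, an element of this space is represented by an $A_2$ web in a disk whose boundary data consists of three groups of points with Kim's clasp $(m_i+\ell,n_i)$ inserted on the $i$-th arc. Since each clasp annihilates any diagram containing a turnback or a cap along its arc, after applying Kuperberg's bigon, triangle, and square relations we may assume the surviving web is non-elliptic (no internal face with fewer than six sides) and that every strand runs between two distinct clasps.

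Next I would parametrize these non-elliptic webs by nonnegative integer counts. Writing $a_{ij}$ for the number of strands connecting clasp $i$ to clasp $j$ (organized by source and sink directions according to the Kuperberg orientation), the endpoints required by the clasps $(m_i+\ell,n_i)$ impose a system of linear equations in the $a_{ij}$. A straightforward calculation shows that a nonnegative integer solution exists if and only if $p_i\geq 0$ for every $i$, which proves the ``only if" direction of (1). The residual freedom in the solution is a single parameter $k\in\{0,1,\ldots,\min(m_i,n_i,p_i)\}$, corresponding to the number of innermost source-sink bridges that can be inserted at a canonical location of the triangle. This yields $\min(m_i,n_i,p_i)+1$ candidate non-elliptic webs and gives the upper bound on $\dim I_\ell((m_i,n_i))$.

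It remains to verify that these candidates are linearly independent, which also supplies the ``if" direction of (1). The natural method is to set up the Gram matrix associated to the diagrammatic inner product: pair each candidate web with the reflection of another, evaluate the resulting closed $A_2$ network by repeatedly applying Kim's recursive formulas \cite{Kim} for the clasps, and verify that the matrix so obtained is triangular in the parameter $k$ with nonzero diagonal. The main obstacle is managing this Gram computation uniformly in $(m_i,n_i,\ell)$: one must track how the clasp expansions interact with the internal bridges and confirm that the resulting products of quantum integers never vanish at generic $q$. Once this nondegeneracy is in hand, spanning plus independence combine to give $\dim I_\ell((m_i,n_i))=\min(m_i,n_i,p_i)+1$ whenever all $p_i\geq 0$, completing the proof.
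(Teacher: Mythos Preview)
Your approach is essentially the one the paper adopts: the paper does not give its own argument but cites Suciu \cite{Suc} and records the resulting basis as the admissible $6$-tuples $(x,y,a,b,u,v)$ satisfying $x+v=m_1$, $y+u=n_1$, $a+y=m_2$, $x+b=n_2$, $b+u=m_3$, $a+v=n_3$ (with an added $\ell$-fold triple point when $\ell\neq 0$), which is exactly your $a_{ij}$ parametrization of non-elliptic webs with one free parameter. Your Gram-matrix sketch for linear independence is the right idea and is likewise what Suciu carries out; as you note, the actual nondegeneracy computation is the substantive work, and neither you nor the paper reproduces it here.
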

A proof of the theorem is in \cite{Suc}.  The proof revolves around computing theta symbols through a fairly involved recursion.  This is analogous to computing the theta symbols in Temperley-Lieb recoupling theory.

An admissible $6-$tuple $(x,y,a,b,u,v)$ is an ordered set of six non-negative integers such that the following holds
\[a+v=m_1,\quad x+u=n_1\]
\[u+b=m_2, \quad v+y=n_2\]
\[a+b=m_3, \quad x+y=n_3\]
There are exactly $\min(m_i,n_i,p_i)+1$ of these admissible $6-$tuples, which form a basis for $I((m_i,n_i))$.
This particular representation of the intertwiner space has been chosen as it will be most hopeful when applying to the construction described below.  A second, potentially more precise figure, utilizes the diagrammatic trick of \lq\lq pillows".  A pillow allows for the reordering of boundary strands, and can be thought of as an isomorphism of the skein space of the disk depending on the realization we are using.

\begin{figure}[ht]
\centering
\includegraphics[scale=.35]{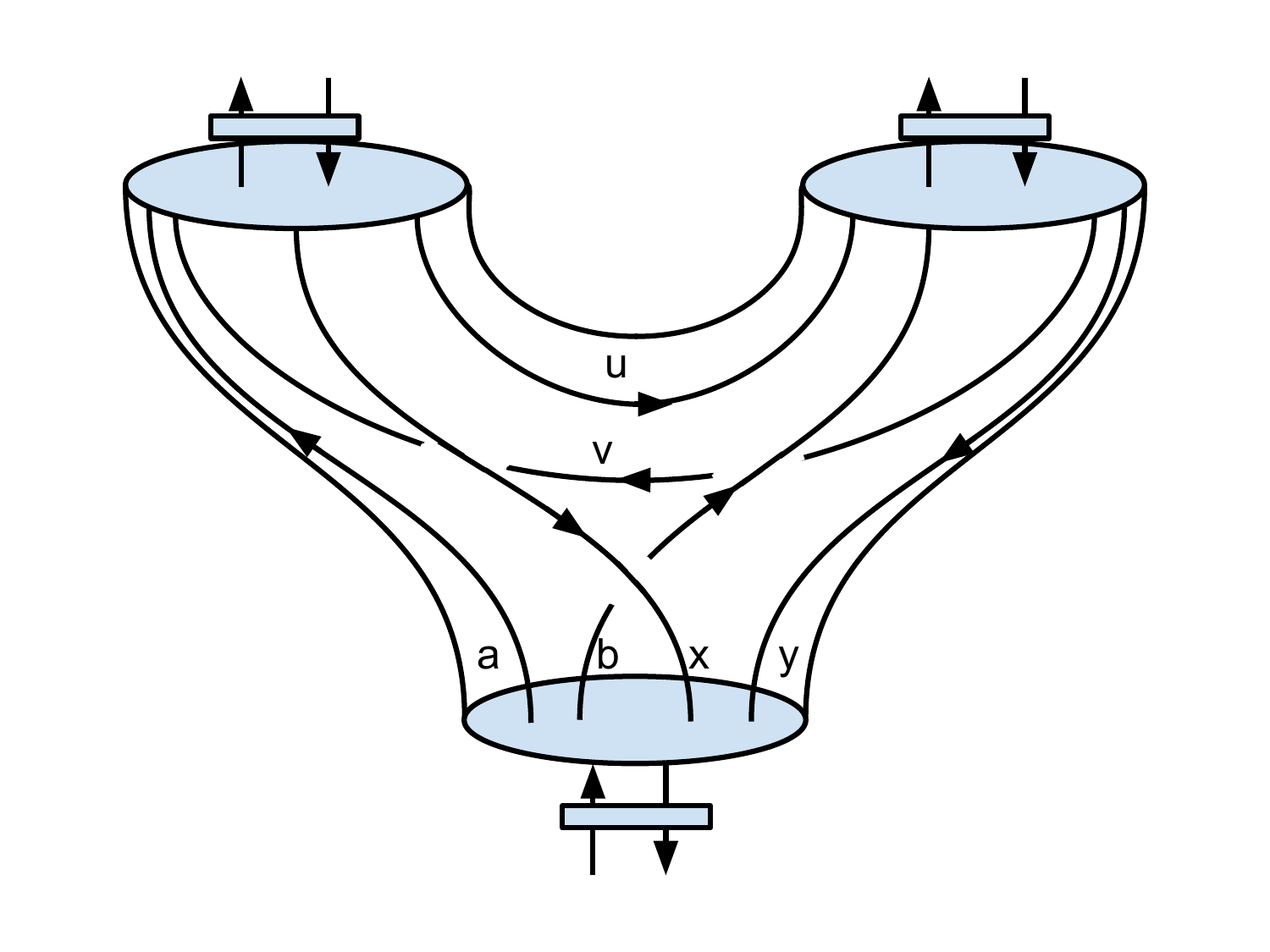}
\caption{$I((m_i,n_i))$}
\end{figure}

\begin{figure}[ht]
\centering
\includegraphics[scale=.35]{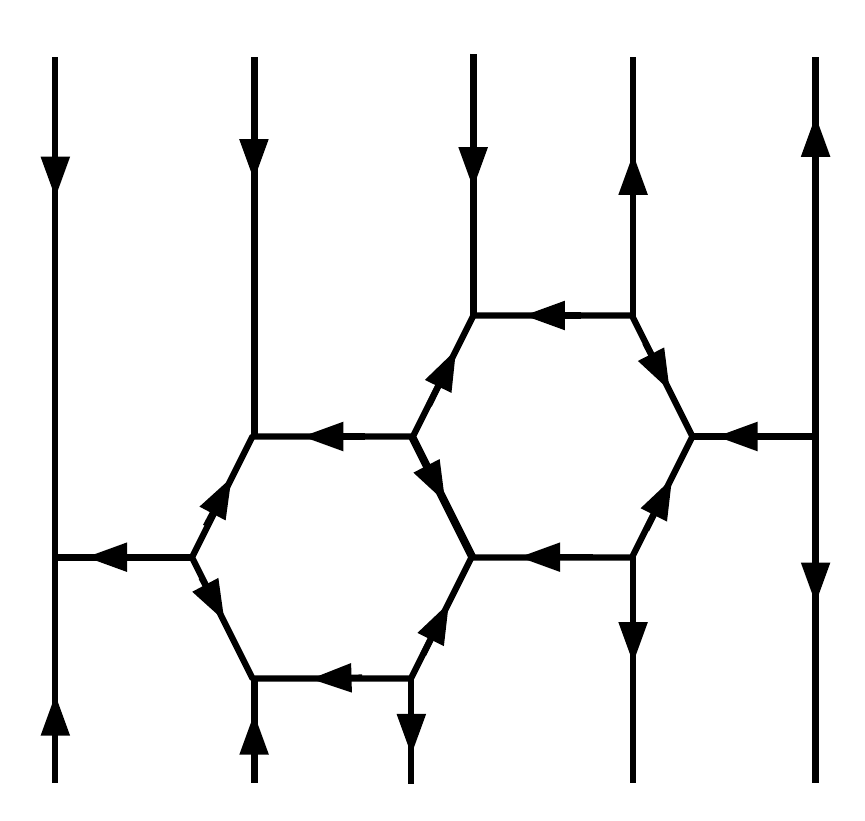}
\caption{A pillow changing the ordering of boundary points from $(2,3)$ to $(3,2)$}
\end{figure}

\begin{figure}[ht]
\centering
\includegraphics[scale=.35]{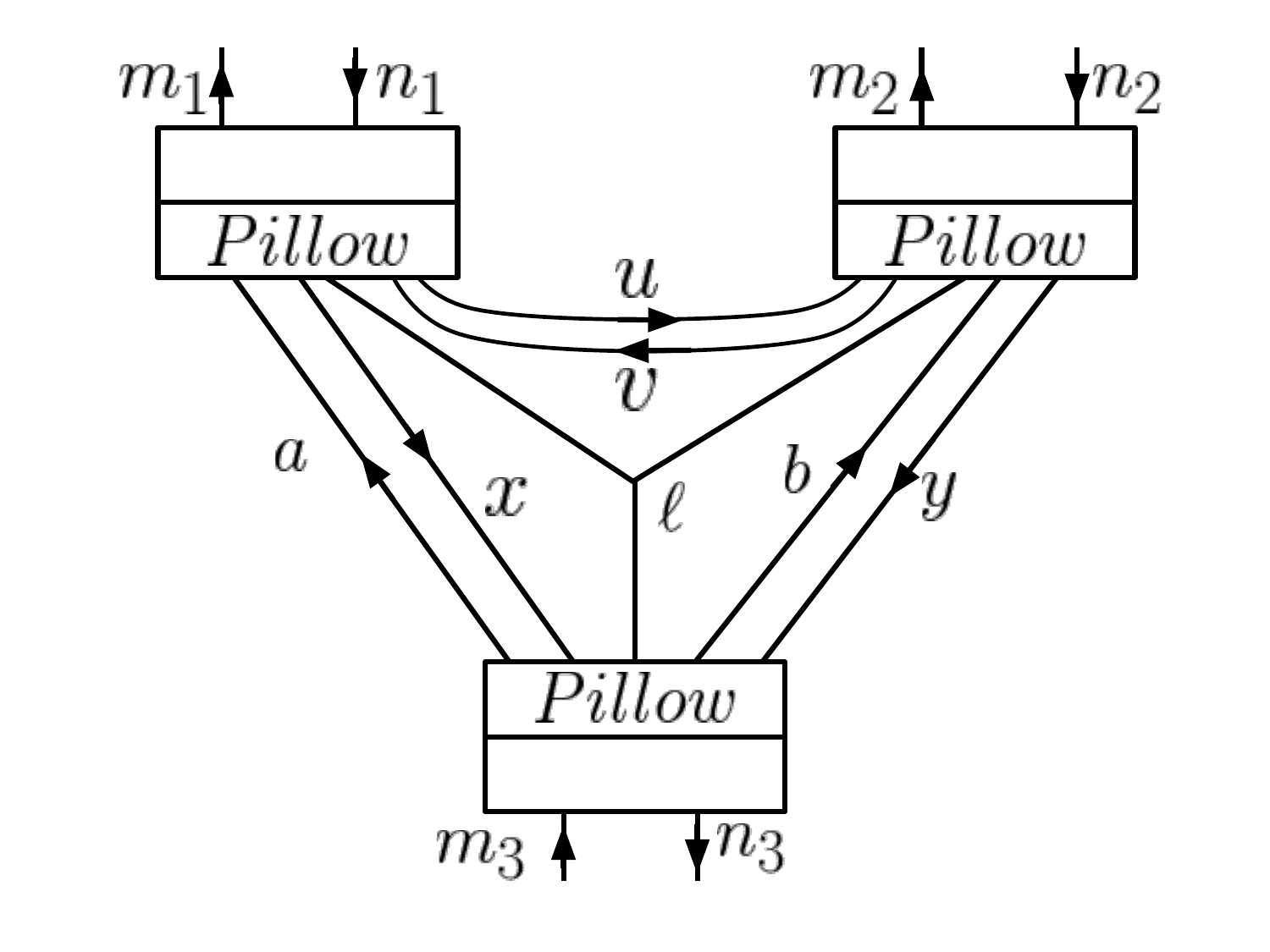}
\caption{$I(m_i,n_i)$ using a pillow construction}
\end{figure}

The case of $\ell\neq 0$ can be seen by adding in an $\ell-$fold triple point in the skein space as well.These two realizations of the same underlying combinatorial basis correspond to the ability of a braiding to switch strands, as annihilating \lq\lq turn backs" will appear in all but one of the terms when expanding the crossing in terms of basis elements.

 \subsubsection{Numerical data}
 Fix a level $k$, and let $A$ be a $6r^{\text{th}}$ primitive root of unity where $r=k+3$.  In particular,
\[[n]=\frac{A^{3n}-A^{-3n}}{A^3-A^{-3}}.\]
This leads to new identities in the level $k$ instance
\[[3r]=0,\qquad [3r-n]=[n],\qquad [3r+n]=-[n], \qquad [n+6r]=[n].\]
Recall that
\[tr((m,n))=\langle (m,n),(m,n)\rangle=\frac{[m+1][n+1][m+n+2]}{[2]}\]

Now we  impose the condition that if
\[\langle \xi,\eta\rangle=0\]
 for all $\eta$, then the relation $\xi=0$ is added to the theory, where $\eta,\xi$ are elements of the same disc space.
This implies that as long as $m+n\leq k$, the fundamental projectors are identical at level $k$ to the classical case, which immediately implies that as long as $s+\ell\leq k$ the triangle spaces are similarly the same as the classical case. Proofs are contained in chapter $5$ of  \cite{Suc}.

\subsection{Construction of TQFT State Spaces $V(Y)$}
TQFTs can be constructed from a manifold invariant generated by a skein theory satisfying some properties, which can be found in \cite{Ati, BHMV}.  Lickorish established that the $SU(N)$ skein theory of Yokota satisfies these properties and can be used to build such a TQFT \cite{L2}.  We will work with the specialization of his work to $SU(3)$ where the manifold invariant was first developed by Ohtsuki and Yamada. A general approach to constructing picture TQFTs is outlined in \cite{FNWW}.
\par
A second construction of this TQFT follows from the general procedure in \cite{Tur}.  Results of Blanchet have shown that the skein theory of Yokota allow for the construction of skein $SU(N)_k$ from a modular tensor category \cite{Blan}.

To describe the vector space $V(Y)$ given by the $2+1$ TQFTs, we fix a level $k$ of the theory.  A vector space $V(Y)$  is spanned by admissible labellings of a trivalent graph associated to $Y$.  Up to a homeomorphism into Euclidean space, we can think of $Y$ as bounding a standard handle body.  Then we can associate to $Y$, the spine of that handlebody, meaning a trivalent graph whose regular neighborhood is the handlebody.  A pants decomposition of $Y$ leads to which trivalent graph is used by taking the dual trivalent graph of that pants decomposition inside the handlebody $H$ bounded by $Y$.  An admissible labelling of a trivalent graph is defined as follows.  Each edge of the trivalent graph is associated with a fundamental projector, and each vertex a vector from the triangle space of the three incident edges.  Then $V(Y)$ is defined to be the free complex vector space spanned by each admissible labelling of this trivalent graph.  Equivalently $V(Y)$ is the reduced skein space, generated by framed links in $M$ and subject to relations coming from the kernel of a bilinear pairing.

Let $h:Y\rightarrow Y$ be an orientation preserving homeomorphism.  Now we look to define an action
\[V_h:V(Y)\rightarrow V(Y).\]
  This action arises through the skein theoretic description given by Ohtsuki and Yamada\cite{OY, Rob}.  We describe the action $V_h$ when $h$ is a positive Dehn twist about a curve $\gamma\subset Y$.  If we take $x\in V(Y)$ to be a labelling of the trivalent graph associated to $Y$ then we have $V_{T_\gamma}(x)$ is the labelling which is given by taking $x$ and adjoining the skein representing the label $x$ with the curve  $\gamma$, colored with $\Omega$, inserted inside the boundary with a $-1$ framing relative to the boundary,  where $\Omega$ is as defined in \cite{OY}.

The sliding property, balanced stabilization hold
\cite{OY}.

\begin{lem}
Let $Y$ be a closed, oriented surface, $h:Y\rightarrow Y$ an orientation preserving homeomorphism, and $V_h:V(Y)\rightarrow V(Y)$ the action of $h$ on $V_g$.  Let
\[C(a)=Z(Y\times I,a\times\{\frac{1}{2}\})\]
and $C(h(a))$ similar, where $a$ is given an orientation.  Either orientation could be used and this would define $\bar{C}(a)$, which could be used in all of the subsequent arguments.  Then
\[V_h C(a)V_h^{-1}=C(h(a)).\]
\end{lem}
\begin{proof}
To begin we note that it suffices to assume that $h$ is a positive Dehn twist about a curve $\gamma$.  Following our above description of the action we will have a skein stacked with $\gamma$ labeled with $\omega$ and framed placed over the curve $a$ placed over the curve $\gamma$ labeled with $\omega$ with the opposite framing, as seen on the left of Figure \ref{omega}.  Then using the handle slide property of $\omega$ we are able to pull the bottom copy of $\gamma$ through $a$ at the cost of a Dehn twist about $\gamma$.  This leaves us with $C(h(a))$ and two copies of $\gamma$ having opposite framings.  Finally we are able to apply the balanced stabilization property of $\omega$ to cancel these opposite copies.  This is visualized in Figure \ref{omega} in the case where $Y$ is the torus with each parallel square having edges identified, and $\gamma$ and $a$ are the meridian and longitude.

\begin{figure}[ht!b]
\centering
\includegraphics[scale=.3]{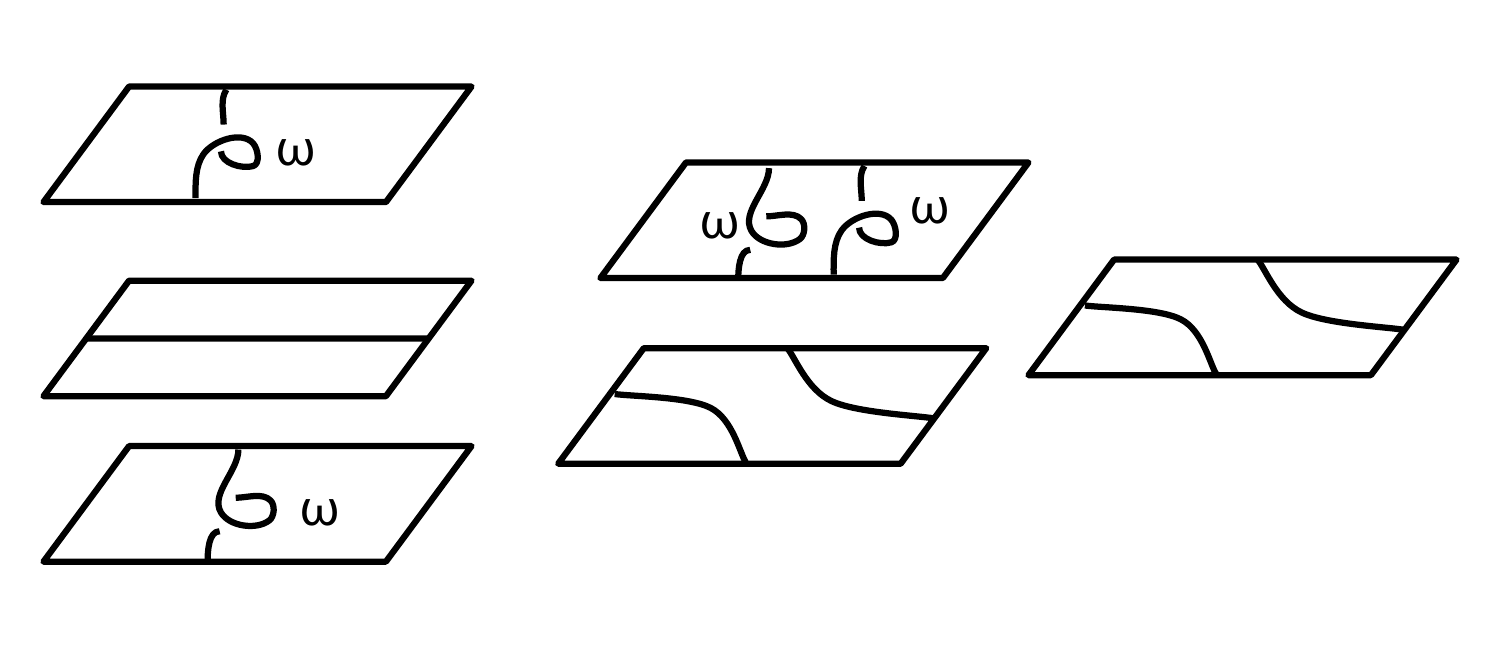}
\caption{•}\label{omega}
\end{figure}
\end{proof}
This result should be thought of as immediately coming from the handle slide invariance and the definitions all originating in the surgery descriptions of $3-$manifolds.

\section{Main Result: Skein Quantum $SU(3)$ asymptotic faithfulness}

\subsection{Lemmas}
\begin{lem}
Let $a$ and $b$ be two non-trivial, non-isotopic simple closed curves on a closed orientable surface $Y$.  Then there exists a pants decomposition of $Y$ such that $a$ is one of the decomposing curves and $b$ is a non-trivial \lq\lq graph geodesic" with respect to the decomposition in the sense that $b$ does not intersect any curve of the decomposition twice in a row.
\end{lem}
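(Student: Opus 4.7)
The plan is to produce the desired pants decomposition by first choosing an arbitrary one containing $a$ and then iteratively flipping its non-$a$ curves to reduce a complexity measure that counts bad arcs of $b$.

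I would begin by isotoping $b$ into minimal position with $a$. In the disjoint case $a\cap b=\emptyset$, since $a$ and $b$ are non-trivial and non-isotopic, the pair $\{a,b\}$ extends to a pants decomposition of $Y$; then $b$ is itself a decomposing curve, meets no other decomposing curve, and the graph geodesic condition is vacuously satisfied.

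In the intersecting case, I would start with an arbitrary pants decomposition $\mathcal{P}$ containing $a$ and put $b$ in minimal position with every curve of $\mathcal{P}$. Call an arc $\alpha$ of $b\cap P$ \emph{bad} (for $P\in\mathcal{P}$) if both endpoints of $\alpha$ lie on the same component of $\partial P$, and let $\nu(\mathcal{P})$ be the total number of bad arcs. If $\nu(\mathcal{P})=0$ we are done. Otherwise, select a bad arc $\alpha$ lying in a pair of pants $P$ with endpoints on some $c\in\partial P$, $c\neq a$, and let $P'$ be the pair of pants on the other side of $c$. Inside the four-holed sphere (or two-holed torus) $Q=P\cup_c P'$, perform a flip replacing $c$ by the curve $c'$ obtained as the boundary component of a regular neighborhood of $\alpha\cup(\text{subarc of }c)$ (with the subarc chosen so that $c'$ is essential in $Q$ and not isotopic to $c$). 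Because $\alpha$ was bad and $b$ is in minimal position, the surgery strictly removes the two intersection points at the endpoints of $\alpha$, so $|c'\cap b|\le |c\cap b|-2$ and $\nu(\mathcal{P}')<\nu(\mathcal{P})$. Since $\nu$ is a non-negative integer, iterating terminates in a decomposition with $\nu=0$.

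The main obstacle lies in the case where every bad arc has its endpoints on $a$ itself, since $a$ is forbidden from being flipped. To handle this, the initial pants decomposition must be chosen to be adapted to the pair $a\cup b$: form a regular neighborhood $N=N(a\cup b)\subset Y$, then let the seed of $\mathcal{P}$ consist of $a$ together with the pairwise non-isotopic essential components of $\partial N$, and complete to a full pants decomposition by adding curves in $Y\setminus N$, which are automatically disjoint from $b$. With this adapted start, any bad arc must touch one of the new $\partial N$-curves rather than only $a$, so the flip step is always legitimately applicable with $c\neq a$. Verifying this property of the adapted initial decomposition, via a case analysis of the local structure of the $4$-valent graph $a\cup b$ and the topology of $N(a\cup b)$, is the technical heart of the argument.
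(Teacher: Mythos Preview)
The paper does not supply its own proof of this lemma; it simply cites Lemma~4.1 of \cite{FWW}. Your proposal therefore cannot be compared against an argument in the present paper, only assessed on its own merits and against \cite{FWW}.

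The overall strategy---extend $a$ to a pants decomposition and iteratively modify the non-$a$ curves to eliminate ``bad'' arcs of $b$---is indeed the approach taken in \cite{FWW}. However, your flip move is incorrect as written. With $Q=P\cup_c P'$ a four-holed sphere, $P$ having boundary $c,d_1,d_2$, and $\alpha\subset P$ an essential arc with $\partial\alpha\subset c$, the arc $\alpha$ separates $d_1$ from $d_2$ in $P$. For either subarc $\gamma\subset c$, the simple closed curve $\alpha\cup\gamma$ is isotopic \emph{within $P$} to one of $d_1,d_2$; since that isotopy lies in $P\subset Q$, your curve $c'=\partial N(\alpha\cup\gamma)$ is boundary-parallel in $Q$ for both choices of $\gamma$. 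Thus the parenthetical requirement ``$c'$ essential in $Q$ and not isotopic to $c$'' can never be met, and $\mathcal{P}'$ is not a pants decomposition. The genuine elementary move replaces $c$ by an essential curve in $Q$ of a different separation type (or, when $Q$ is a one-holed torus, by a curve meeting $c$ once), and one must then argue carefully that the chosen complexity strictly drops. Note too that the implication $|c'\cap b|\le|c\cap b|-2\Rightarrow\nu(\mathcal{P}')<\nu(\mathcal{P})$ is not automatic: cutting the fixed arcs $b\cap Q$ along a new interior curve can create new bad sub-arcs relative to the unchanged components of $\partial Q$.

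Your handling of bad arcs on $a$ itself is, by your own admission, only an outline: the assertion that seeding with $\{a\}\cup\partial N(a\cup b)$ forces every bad arc to meet some $c\neq a$ is not established here, and this is exactly the delicate point.
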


This is lemma $4.1$ of \cite{FWW}.

\begin{lem}
The identity tangle factors as the sum of projectors in the $A_2$ web.  In fact, if the level is high enough then the identity on $m$ positive strands and $n$ negative strands can be written as
\[P_{(m,n)}+\sum_{a+b<p+q} P_{(a,b)},\]
where appropriate \lq\lq connecting skeins" are included where needed.
\end{lem}
\begin{proof}
This fact follows from the recursive description of the projectors given above.  We introduce the notation $id(m,n)$ to be the identity tangle on $m$ upward oriented strands and $n$ downward oriented strands.  Then the claim becomes
\[id(m,n)=\sum x^{(m,n)}_{(a,b)}\circ P_{(a,b)}\circ y^{(a,b)}_{(m,n)}\]
Where the sum is taken over $(a,b)$ with $(a+b)\leq (m,n)$ and $x$ and $y$ are skeins that connect the appropriate number of strands to $id(m,n)$.  We proceed via induction, meaning assume that $i(a,b)$ can be factored for all $(a,b)$ with $a+b\leq m+n$.  Our first step is to note that $id(m,n)$ can be written as $id(m,n-1)$ with a downward oriented strand immediately to the right, think of $id(m,n-1)\otimes id(0,1)$.  With this in mind we can apply the factorization to the sub-tangle, see Figure $10$.
\begin{figure}[ht]
\centering
\includegraphics[scale=.35]{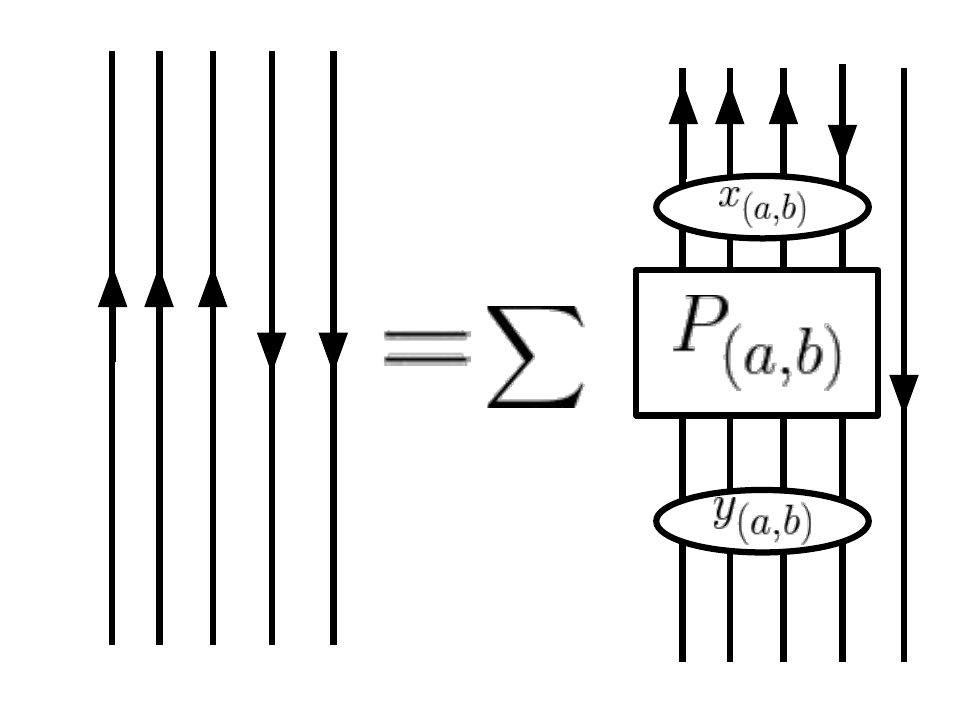}
\caption{}\label{turn}
\end{figure}

Now we utilize the recursive definition for the projectors to rewrite each term in our sum, as seen in figure $11$.
\begin{figure}
\centering
\includegraphics[scale=.55]{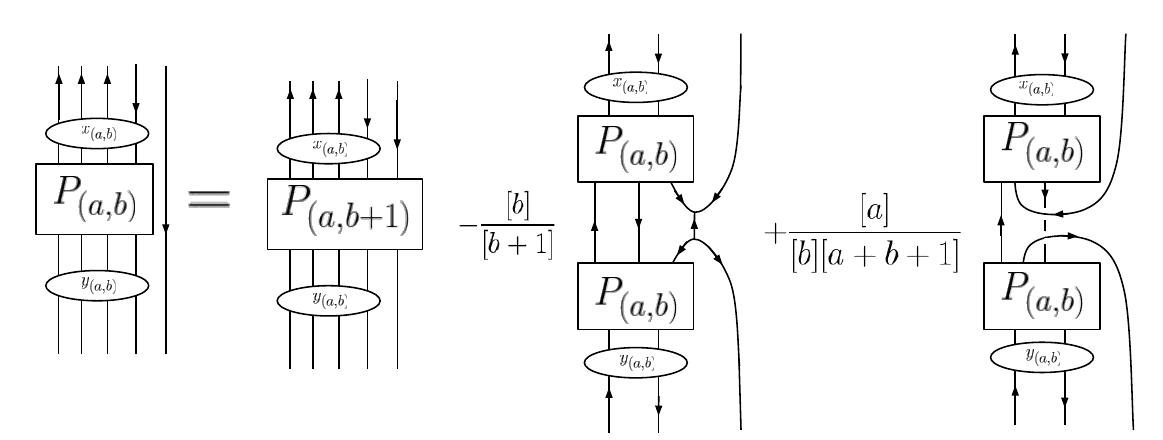}
\caption{}\label{turn}
\end{figure}
Then we see that this $3-$ term expansion gives us one term basically already in the form we desire, with the skein $x_{(a,b+1)}$ having a component that looks like $x_{(a,b)}$ with a disjoint strand immediately to the right.  The next two terms we see have some skeins above and below identity tangles on fewer than $a+b$ strands, as $(a+1)+(b-2)$ and $(a-1)+b$ both satisfy this.  This allows us to apply our inductive hypothesis on these "internal" identity tangles.  Then we can combine like terms, and in the process combining objects into a recursive construction for the $x$ and $y$ skeins.  We note that since $a+b<m+n-1$ we have $a+b+1<m+n$ for all $(a,b)$.  We also observe that the second half of our theorem holds as when applying this recursion to the lead term, we find the lead term in our new sum to be exactly $P_{(m,n)}$.
\end{proof}

\begin{prop}
Let $Y$ be a closed, oriented surface, $h$ an orientation preserving homeomorphism, and $V_h$ the skein quantum $SU(3)$ action.  Suppose there exists a simple closed curve $a\subset Y$ such that $h(a)$ is not isotopic (as a set) to $a$.  Then $V_h$ is a multiple of the identity for at most finitely many $k$.  That is, $h$ is eventually detected as $k$ increases .
\end{prop}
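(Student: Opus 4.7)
The plan is to argue by contradiction: assume $V_h = \lambda \cdot \mathrm{Id}$ for infinitely many levels $k$. By the conjugation lemma of Section~3, $V_h C(c) V_h^{-1} = C(h(c))$ for every simple closed curve $c\subset Y$, so a scalar $V_h$ forces $C(a) = C(h(a))$ as operators on $V(Y)$. It therefore suffices to exhibit, for all sufficiently large $k$, a single state vector $v_k\in V(Y)$ on which $C(a)v_k\ne C(h(a))v_k$; here $a$ is the curve supplied by the hypothesis (essential, since $h$ is a homeomorphism and $h(a)\not\sim a$).

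Apply the Graph Geodesic Lemma to $\{a, b:=h(a)\}$ to obtain a pants decomposition $P$ of $Y$ containing $a$ in which $b$ is a non-trivial graph geodesic. Build a basis of $V(Y)$ from admissible labellings of the dual trivalent spine of $P$: each edge $e$ is decorated with a fundamental projector $(m_e,n_e)$ satisfying $m_e+n_e\le k$, and each trivalent vertex $w$ with a basis vector of its triangle space, explicitly the admissible $6$-tuples together with the parameter $\ell_w$ from \cite{Suc}. Since $a$ meets the spine transversally in exactly one point, lying on the edge $e_a$ dual to $a$, the operator $C(a)$ is diagonal in this basis, with eigenvalue on a labelled spine depending only on the pair $(m_{e_a}, n_{e_a})$.

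Introduce a lexicographic complexity on basis vectors that records, in decreasing order of priority, the pairs $(m_e, n_e)$ and the vertex parameters $\ell_w$ sitting on the edges and vertices of the spine that $b$ crosses essentially. The operator $C(b)$ is computed by cabling $b$ with the Kirby colour $\Omega$, isotoping the resulting skein into the handlebody via the handle-slide and balanced-stabilization properties of $\Omega$, and re-expanding in the $6$-tuple basis using the Kim clasp recursions. Because $b$ is a graph geodesic that is not isotopic into any single pants curve, it must essentially cross some edge of $P$ other than $e_a$, and the resolution of each such crossing produces a summand whose labels or $\ell$-parameter on the crossed edge strictly exceed those of the starting vector. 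Choose $v_k$ to be a basis vector of maximal complexity whose data still leaves room for such a unit increase at level $k$; then $C(a)v_k$ is a scalar multiple of $v_k$, while $C(b)v_k$ has a nonzero coefficient on a distinct basis vector of strictly larger complexity. This yields $C(a)v_k\ne C(b)v_k$ and the desired contradiction.

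The main obstacle, highlighted in the introduction and in \cite{Suc}, is the definition of the complexity. In the $SU(2)$ proof of \cite{FWW} a single integer---the number of parallel copies in a cabling---suffices, because edges carry a single integer and every triangle space is one-dimensional. In the $SU(3)$ setting each label is an oriented pair $(m,n)$ and, by the theorem of Section~3, the triangle spaces have dimension $\min(m_i,n_i,p_i)+1$ with an extra parameter $\ell$; one must therefore design the lexicographic order so that $C(a)$ is manifestly diagonal and so that every resolution of a transverse crossing of an $\Omega$-coloured cable of $b$ with a spine edge yields, after re-expansion in the $6$-tuple basis of \cite{Suc}, at least one summand of strictly larger complexity with a nonzero coefficient. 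Once this bookkeeping is in place, the remaining comparison of coefficients is a direct analogue of the Jones-Wenzl recursion step in \cite{FWW}.
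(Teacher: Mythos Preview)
Your overall architecture---reduce via the conjugation lemma to $C(a)\neq C(h(a))$, then invoke the graph-geodesic pants decomposition---matches the paper. Two concrete problems remain.

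First, a genuine error: $C(b)$ is \emph{not} computed by cabling $b$ with the Kirby colour $\Omega$. That is the description of the Dehn-twist operator $V_{T_b}$. The curve operator $C(b)=Z(Y\times I,\,b\times\{\tfrac12\})$ simply inserts a single fundamental-coloured copy of $b$ into the skein module; no handle-slide or balanced-stabilization property of $\Omega$ enters. Your re-expansion paragraph, as written, is therefore analysing the wrong operator.

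Second, you explicitly leave the complexity as an unresolved ``main obstacle'' and then compound the difficulty by choosing a maximal-complexity test vector, which forces you to track how $C(b)$ moves \emph{every} label and every vertex parameter. The paper sidesteps all of this by taking the \emph{empty} labelling $Z(H)$ as the test vector. Since $a$ bounds a disk in $H$, one gets $C(a)Z(H)=d\,Z(H)$ for free. On the other side, $C(h(a))Z(H)=Z(H,h(a))$ is just the single curve $h(a)$ pushed into $H$, and the complexity is nothing more than the oriented geometric intersection data: label each edge $e$ by $(p_e,q_e)$, the signed counts of intersections of $h(a)$ with the dual disk, set $m=\max_w\{p_e+p_f+p_g,\,q_e+q_f+q_g\}$ over vertices $w$, and take $k\ge 2m$. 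Along each edge the decomposition of $V^{\otimes p_e}\otimes\bar V^{\otimes q_e}$ contains $(p_e,q_e)$ with multiplicity one and all other summands have strictly smaller $m+n$; at each vertex the resulting skein in the triangle space is nonzero precisely because the graph-geodesic condition forbids any strand from re-entering the disk it just exited, so it has a nonzero coefficient on some $6$-tuple basis vector. Hence $Z(H,h(a))$ has a nonzero component on a non-empty labelling and cannot be a multiple of $Z(H)$. No lexicographic order, no tracking of $\ell$-parameters under $C(b)$, and no ``room to increase'' argument is needed once you start from the empty vector.
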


\begin{proof}
Since
\[V_h C(a)V_h^{-1}=C(h(a)),\]
 it suffices to show that for some $k$, $C(a)\neq C(h(a))$.
\par
By the graph geodesic lemma $2.1$ above, there exists a handlebody $H$ bounded by $Y$ such that $a$ bounds an embedded disk in $H$ and $h(a)$ is a non-trivial graph geodesic with respect to a spine $S$ of $H$.  This is illustrated below, where encircled region is exactly what is forbidden by the graph geodesic property.
\begin{figure}[ht]
\centering
\includegraphics[scale=.35]{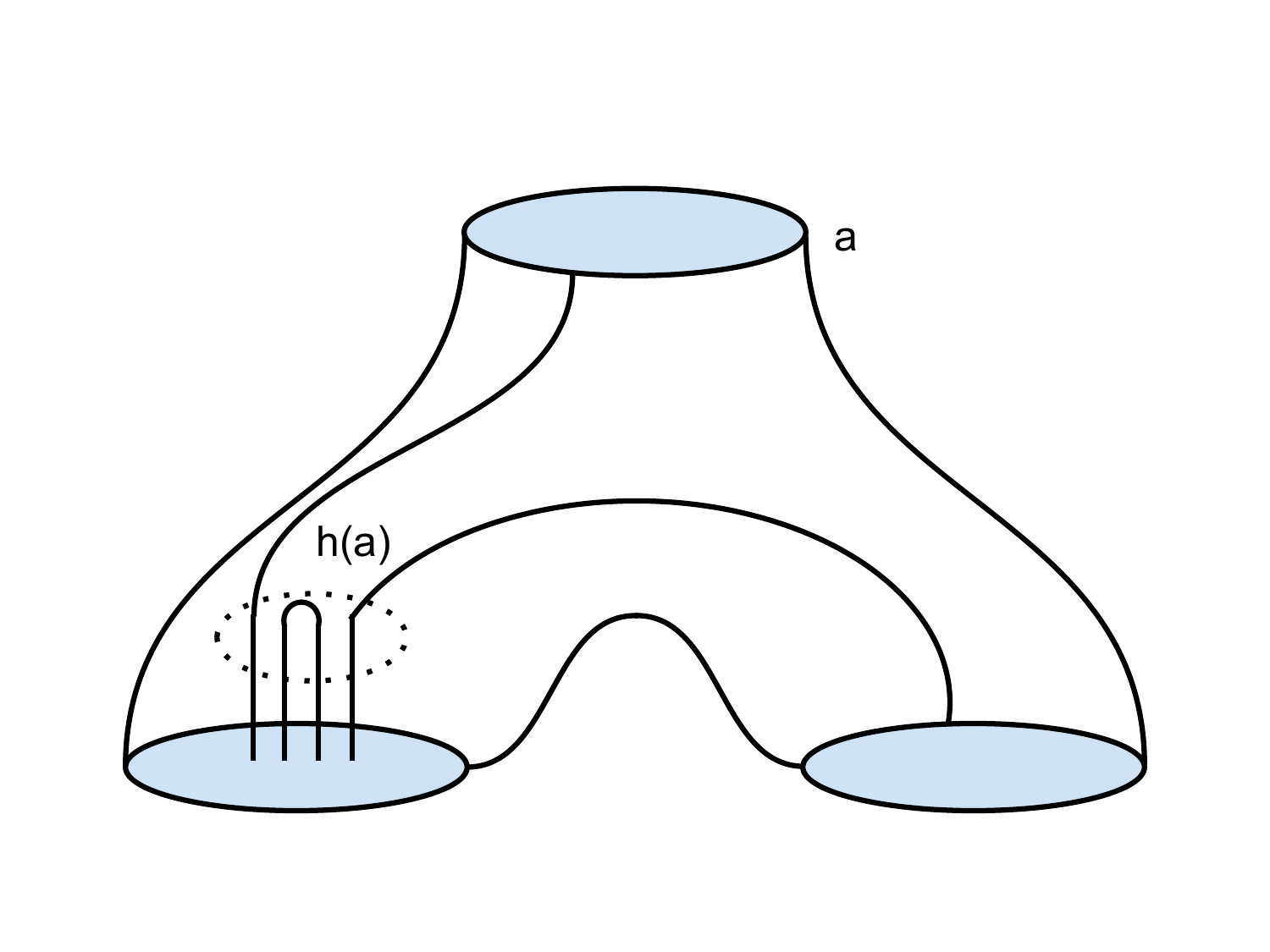}
\caption{}\label{turn}
\end{figure}
Now let $Z(H)\in V(Y)$ be the vector determined by $H$ with the empty labelling, and $Z(H,h(a))$ the vector determined by the pair $(H,h(a))$ (where $h(a)$ is pushed into the interior of $H$).  We have
\[C(a)(Z(H))=Z(H,a)=dZ(H)\]
as $a$ is taken to bound an embedded disk in $H$.  It is also true that
\[C(h(a))(Z(H))=Z(H,h(a)),\]
meaning it suffices to show that $Z(H,h(a))$ is not a multiple of $Z(H)$.
\par
Given a particular labelling $L=(l_1,...,l_n)$, where $l_i$ corresponds to a labelling of $v_i$ a vertex of the spine $S$, we construct a family of labellings $\{w_L\}$ of the spine $S$.   For each edge $e$ of $S$,  look at its dual disk $D$ and let $p$ be the number of times that $h(a)$ passes through $D$ where the orientation given by the orientation of boundary of $D$ and the orientation of $h(a)$ match, and $q$ the number of times $h(a)$ passes through $D$ with the opposite orientation.  Up to the potential of including a pillow in a collar near this disk we have can think of $h(a)$ as passing through the disk as the $(p,q)$ identity tangle, $id(p,q)$ as defined above.
\begin{figure}[ht]
\centering
\includegraphics[scale=.4]{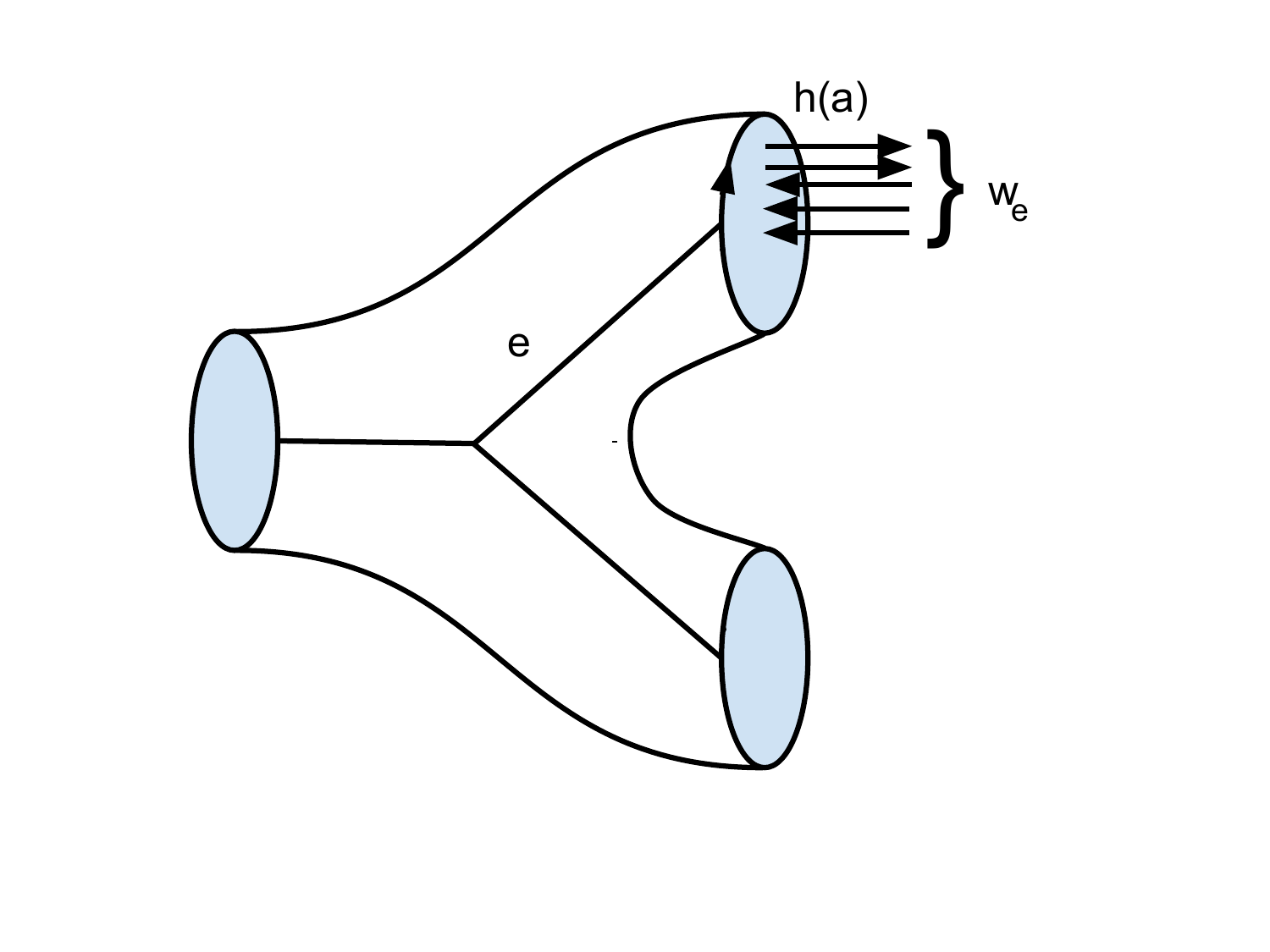}
\caption{•}\label{weight}
\end{figure}
Then the edge $e$ is labeled by $(p,q)$.  Finally define the complexity of a state vector as
\[m=\max(p_e+p_f+p_g, q_e+q_f+q_g\}\]
where $e,f,g$ are three edges meeting at a vertex, and the max is over all vertices.  Finally, pick a level $k\geq 2m$.
\par
Let $\{b_{w_L}\}$ be the basis vectors for $V(Y)$ determined by the labellings $w_L$ of $S$.  We claim that
\[Z(H,h(a))=\sum_{L}\lambda_L b_{w_L}+v,\]
where $\lambda_i\neq 0$ for all $i$, and $v$ consists of multiples of $b_v$ where $v$ is a label where
\[m_e+n_e< p_e+q_e\]
for each each $e$ of $S$.  Applying Lemma $4.2$ we see that each edge label for $Z(H,h(a))$ can be factored in the desired manner. We are left to look at the vertex of each trivalent graph.  This vertex is a skein in the triangle space of $(m_1,n_1), (m_2,n_2)$ and $(m_3,n_3)$.  Thus this skein is $\sum \lambda_i c_i$ where $c_i$ are the above basis vectors seen in Figure $6$.  Now we claim as least one of the $\lambda_i$ is nonzero.  As a skein, this trivalent vertex corresponds to the part of $h(a)$ which lives on a particular pair of paints, being pushed into the part of the handlebody bounded by this pair of pants.    Combinatorially, we can describe each component of $h(a)$ by which boundary components it connects.  The graph geodesic lemma tells us exactly that $h(a)$ never starts and ends at the same boundary component as seen in Figure \ref{turn}.  Then, introducing enough pillows, we are able to  rearrange each strand that connects boundary components with the same orientation next to each other. This gives us exactly a realization of a basis element as described for the intertwiner spaces described above, as seen in Figure $6$ for example.
 As we can do this for each vertex and we will have a nonzero scalar pulled from each one.  Therefore, $Z(H,h(a))$ cannot be a multiple of $Z(H)$ as it is not the empty labelling.
\end{proof}

\begin{thm}
Let $Y$ be a closed connected oriented surface and $\mathcal{M}(Y)$ its mapping class group. For every non-central $h\in\mathcal{M}$ there is an integer $r_0(h)$ such that for any $r\geq r_0(h)$ and any $A$ a primitive $6r$th root of unity, the operator
\[\langle h\rangle:V_A(Y)\rightarrow V_A(Y)\]
is not the identity,
\[\langle h\rangle\neq 1\in \mathcal{P}End(V_A),\]
the projective endomorphisms.  In particular, an appropriate infinite direct sum of quantum $SU(3)$ representations will faithfully represent these mapping class groups modulo center.
\end{thm}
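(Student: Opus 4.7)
The plan is to derive this theorem as a clean corollary of the preceding Proposition, together with a standard fact from surface topology. First, I would recall that a non-central element of the mapping class group $\mathcal{M}(Y)$ of a closed orientable surface must move the isotopy class of some essential simple closed curve. More precisely, the center of $\mathcal{M}(Y)$ is trivial for genus $g \geq 3$, generated by the hyperelliptic involution for $g = 2$, and generated by the elliptic involution for $g = 1$; in each case, if $h \in \mathcal{M}(Y)$ fixes the isotopy class of every essential simple closed curve, then $h$ is central (standard consequence of the action of $\mathcal{M}(Y)$ on the curve complex, cf.\ Farb--Margalit or Ivanov). Hence, given a non-central $h$, there exists an essential simple closed curve $a \subset Y$ such that $h(a)$ is not isotopic to $a$ as an unoriented set.

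Next, I would orient $a$ arbitrarily and apply the Proposition to the pair $(h,a)$. The Proposition asserts that $V_h$ is a scalar multiple of the identity for only finitely many levels $k$. Translating to the notation of the theorem, where $r = k+3$ and $A$ is a primitive $6r$th root of unity, I would set $r_0(h)$ to be any integer strictly greater than $3$ plus the largest $k$ at which the Proposition still allows $V_h$ to be a scalar. For every $r \geq r_0(h)$, the operator $\langle h\rangle$ on $V_A(Y)$ is then not projectively the identity, which is exactly the conclusion of the theorem.

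The final assertion about an infinite direct sum is then a formal consequence. Choose, for each integer $r \geq 2$, a primitive $6r$th root of unity $A_r$, and form the projective representation on $W = \bigoplus_{r \geq 2} V_{A_r}(Y)$. Any non-central $h$ moves some simple closed curve and hence, by the step above, fails to be projectively scalar on every $V_{A_r}(Y)$ with $r \geq r_0(h)$; in particular, $h$ is detected projectively on $W$. Conversely, central elements of $\mathcal{M}(Y)$ act (projectively) by scalars on each summand because they act trivially on all isotopy classes of simple closed curves and hence commute with every curve operator $C(a)$. Thus the kernel of the resulting projective representation on $W$ is exactly the center of $\mathcal{M}(Y)$.

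The main obstacle is really just the classical topological input that a non-central mapping class moves some simple closed curve; once this is invoked, the theorem is a packaging of the Proposition. The substantive skein-theoretic content, including the identification of the complexity via the parameter $\ell$ and the basis of triangle spaces coming from Suciu's work, has already been absorbed in the Proposition, so no additional quantum $SU(3)$ computation is needed at this stage.
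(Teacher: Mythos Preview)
Your proposal is correct and follows essentially the same route as the paper: reduce the theorem to the Proposition via the topological fact that a non-central mapping class must move some simple closed curve, then package the finitely-many-exceptions conclusion into a choice of $r_0(h)$. The paper justifies the topological input slightly differently---rather than invoking the structure of the center or the curve complex, it observes directly that if $h$ fixes every isotopy class of simple closed curve then $h T_\gamma h^{-1}=T_{h(\gamma)}=T_\gamma$ for all $\gamma$, so $h$ commutes with all Dehn twists and is therefore central---but this is the same fact by a shorter argument, and your extra discussion of the direct sum (which the paper omits) is fine, though the claim that commuting with all $C(a)$ forces a scalar is not fully justified and is not actually needed for the statement as written.
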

\begin{proof}
If $h$ fixes all simple closed curves then $h$ must commute with all possible Dehn twists.  As Dehn twists generate the mapping class group of any surface we have that $h$ must be in the center.  Thus for any non-central $h\in\mathcal{M}$ that $h(a)$ is not isotopic to $a$, the main theorem follows from the above proposition.
\end{proof}


\begin{thebibliography}{0000}

\bibitem{And} J. Andersen, {\em Asyptotic Faithfulness of the Quantum $SU(n)$ Representations of the Mapping Class Group}, Ann. Math., {\bf 163} (2006), 347-368.

\bibitem{Ati} M. F. Atiyah, {\em The geometry and physics of knots}, Cambridge University Press 1990.

\bibitem{Blan} C. Blanchet, {\em Hecke algebras, modular categories and 3-manifolds quantum invariants}, Topology {\bf 39} (2000) 193-223.

\bibitem{BHMV} C. Blanchet, N. Habegger, G. Masbaum, and P. Vogel, {\em Topological quantum field theories derived from the Kauffman Bracket}, Topology {\bf 34}(4) (1995) 883-927.

\bibitem{DHW}O. Davidovich, T. Hagge, and Z. Wang, {\em On arithmetic modular categories,}
arXiv preprint arXiv:1305.2229 (2013).

\bibitem{FNWW} M. Freedman, C. Nayak, K. Walker, and Z. Wang, {\em On picture (2+1)-TQFTs}, Topology and Physics, Nankai Tracts Math. 12, Hackensack, NJ: World Sci. Publ., 2008 19.

\bibitem{FWW} M. Freedman, K. Walker, Z. Wang, {\em Quantum $SU(2)$ faithfully detects mapping class groups modulo center},  Geom. and Topol. {\bf 6} (2002) 523-539.


\bibitem{Jon} V.F.R. Jones, {\em Index for subfactors},  Invent. Math. 72.1 (1983): 1-25.


\bibitem{KL}  L.H. Kauffman and S. L. Lins, {\em Temperley-Lieb recoupling theory and invariants of 3-manifolds}, Ann. Math. Study, Vol {\bf 134}, Princeton University Press, Princeton (1994).

\bibitem{Kim} D. Kim,  {\em Jones-Wenzl idempotents for rank $2$ simple Lie algebras}, Osaka J. Math., {\bf 44} (2007) 691-722.

\bibitem{Kup} G. Kuperberg,  {\em Spiders for rank $2$ Lie algebras}, Comm.  Math.  Phys.  {\bf 180} (1996), 109-151.

\bibitem{L2} W. B. R. Lickorish, {\em Skeins, SU(N) three-manifold invariants and TQFT}, Comment. Math. Helv. {\bf 18}(1) (2000)45-64.

\bibitem{MBomb}G. Masbaum, {\em Quantum representations of mapping class groups},  Groupes et Géométrie (Journée annuelle 2003 de la SMF) (2003): 19-36.

\bibitem{OY} T. Ohtsuki and S. Yamada, {\em Quantum su(3) invariants via lienar skein theory}, J. Knot Theory Ramifications {\bf 6} (1997) 373-404.

\bibitem{Rob}J. Roberts, {\em Skeins and mapping class groups},  Math. Proc. Cambridge Philos. Soc., Vol. 115. No. 01. Cambridge University Press, 1994.

\bibitem{RT1} N. Reshetikhin and V. G. Turaev,  {\em Invariants of 3-manifolds via link polynomials and quantum
groups}, Invent. Math. 103 (1991), no. {\bf 3}, 547597.

\bibitem{RT2} N. Reshetikhin and V. G. Turaev, {\em Ribbon graphs and their invariants derived from quantum
groups}, Comm. Math. Phys.127 (1990), no. {\bf 1}, 126.

\bibitem{RW}E. Rowell and Z. Wang, {\em Mathematics of Topological Quantum Computing}, Bull. AMS. (to appear), arXiv preprint arXiv:1705.06206.

\bibitem{RSW} E. Rowell, R. Strong, Z. Wang, {\em On Classification of Modular Tensor Categories}, Commun. Math. Phys. {\bf 292} (2009) 343-389.


\bibitem{Suc} L. C. Suciu, {\em The SU(3) Wire Model}, PhD thesis, Pennsylvannia State Univ., 1997.

\bibitem{Tur} V. G. Turaev, {\em Quantum Invariants of Knots and 3-manifolds}, de Gruyter Studies in Math. {\bf 18} Waltder de Gruyter \& Co., Berlin, 1994.

\bibitem{Wang} Z. Wang, {\em Topological quantum computation}, CBMS Regional Conference Series in Mathematics, {\bf 112}. American Mathematical Society, Providence, RI, 2010.

\bibitem{Witt} E. Witten, {\em Quantum field theory and the Jones polynomial}, Comm. Math. Phys. {\bf 121} (1989), no. 3, 351--399.



\end{thebibliography}
\end{document}